\documentclass[10pt]{article}

\usepackage{amsthm}
\usepackage{amsmath}
\usepackage{amssymb}

\def\a{\alpha}

\def\d{\delta}

\def\e{\varepsilon}
\def\f{\varphi}
\def\g{\gamma}
\def\Ga{\Gamma}

\def\l{\lambda}
\def\La{\Lambda}

\def\R{{\bf R}}

\def\Z{{\bf Z}}

\def\F{{\bf F}}

\def\wt{\widetilde}

\def\GG{\mathcal{G}}

\def\fh{\mathfrak{h}}

\def\fsl{\mathfrak{sl}}
\def\sm{\backslash}
\def\supp{{\rm supp}\,}

\def\be{\begin{equation}}
\def\ee{\end{equation}}
\def\bea{\begin{eqnarray}}
\def\eea{\end{eqnarray}}
\def\bean{\begin{eqnarray*}}
\def\eean{\end{eqnarray*}}

\newtheorem{lem}{Lemma}
\newtheorem{thm}[lem]{Theorem}
\newtheorem*{thma}{Theorem A}
\newtheorem*{thmb}{Theorem B}

\newtheorem*{lemc}{Lemma C}

\newtheorem{prp}[lem]{Proposition}

\newtheorem{cor}[lem]{Corollary}

\theoremstyle{definition}

\def\sep{\;\vrule\;}

\def\Tr{{\rm Tr}}
\def\diag{{\rm diag}}
\title{Expansion in $SL_d(\Z/q\Z)$, $q$ arbitrary}

\begin{document}
\author{Jean Bourgain and P\'eter P. Varj\'u}
\maketitle
\begin{abstract}
Let $S$ be a fixed finite symmetric subset of $SL_d(\Z)$,
and assume that it generates a Zariski-dense subgroup $G$.
We show that the Cayley graphs of $\pi_q(G)$ with respect
to the generating set $\pi_q(S)$ form a family of expanders,
where $\pi_q$ is the projection map $\Z\to\Z/q\Z$.
\end{abstract}

\section{Introduction}
\label{sc_intro}
Let $\GG$ be a graph, and for a set of vertices
$X\subset V(\GG)$, denote by $\partial X$ the set of
edges that connect a vertex in $X$ to one in $V(\GG)\sm X$.
Define
\[
c(\GG)=\min_{X\subset V(\GG),\;\;|X|\le|V(\GG)|/2}
\frac{|\partial X|}{|X|},
\]
where $|X|$ denotes the cardinality of the set $X$.
A family of graphs is called a family of expanders, if
$c(\GG)$ is bounded away from zero for
graphs $\GG$ that belong to the family.
Expanders have a wide range of applications in computer
science (see e.g. Hoory, Linial and Widgerson \cite{HLW} for a recent
survey on expanders) and
recently they found remarkable applications in pure mathematics
as well (see Bourgain, Gamburd and Sarnak \cite{BGS} and
Long, Lubotzky and Reid \cite{LLR}).
For further motivation, we refer to these papers.

Let $G$ be a group and let $S \subset G$ be a symmetric (i.e. closed
for taking inverses) set of generators.
The Cayley graph $\GG(G,S)$ of $G$ with respect to the
generating set $S$ is defined to be the graph whose vertex set
is $G$, and in which two vertices $x,y\in G$ are connected
exactly if $y\in Sx$.
Let $q$ be a positive integer, and denote by $\pi_q:\Z\to\Z/q\Z$
the residue map.
$\pi_q$ induces maps in a natural way in various contexts, we always
denote these maps by $\pi_q$.
Consider a fixed symmetric $S\subset SL_d(\Z)$ and assume that it generates
a group $G$ which is Zariski-dense in $SL_d$.
The purpose of this paper is to show that for any such fixed set $S$ and
for $q$ running through the integers, the
family of Cayley graphs $\GG(SL_d(\Z/q\Z),\pi_q(S))$ is an
expander family.
More precisely we prove
\begin{thm}
\label{th_genmod}
Let $S\subset SL_d(\Z)$
be finite and symmetric.
Assume that $S$ generates a subgroup $G<SL_d(\Z)$
which is Zariski dense in $SL_d$.

Then
$\GG(\pi_{q}(G),\pi_q(S))$ form a family of expanders,
when $S$ is fixed and $q$ runs through the integers.
Moreover, there is an integer $q_0$ such that $\pi_q(G)=SL_d(\Z/q\Z)$
if $q$ is prime to $q_0$.
\end{thm}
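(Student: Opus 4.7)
The plan is to prove the spectral gap (equivalently, expansion) by the Bourgain--Gamburd method. Let $\mu = \frac{1}{|S|}\sum_{s\in S}\delta_s$, viewed as a probability measure on $\pi_q(G)$. The goal is to show
\[
\|\mu^{*n}\|_2^2 \le (1+o(1))/|\pi_q(G)|
\]
for some $n = O(\log|\pi_q(G)|)$; by the standard Cheeger-type equivalence this yields a uniform lower bound on $c(\GG)$. The ``moreover'' statement on surjectivity is the strong approximation theorem of Matthews--Vaserstein--Weisfeiler and Nori, so the essential content is the flattening estimate above.

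The Bourgain--Gamburd machine assembles this from three inputs: (i) \emph{non-concentration} of $\mu^{*n_0}$ in proper (algebraic) subgroups, for $n_0 \asymp \log|\pi_q(G)|$; (ii) \emph{quasirandomness} of $\pi_q(G)$, meaning every nontrivial irreducible representation has dimension $\ge |\pi_q(G)|^{\kappa}$; and (iii) a \emph{product theorem / $\ell^2$-flattening lemma} asserting that any probability measure $\nu$ on $\pi_q(G)$ with a mild non-concentration property and $\|\nu\|_2^2 \ge |\pi_q(G)|^{-1+\delta}$ satisfies $\|\nu*\nu\|_2 \le \|\nu\|_2^{1+\kappa}$. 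Given these, a dyadic iteration starting from $\mu^{*n_0}$ drives $\|\mu^{*n}\|_2$ down to the quasirandomness threshold in $O(\log\log|\pi_q(G)|)$ convolution steps, and the final gap is closed by the Frobenius trace bound using (ii).

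The main obstacle is establishing the flattening lemma (iii) for $SL_d(\Z/q\Z)$ at \emph{arbitrary} $q$. Unlike the prime case treated via Helfgott, Pyber--Szab\'o and Breuillard--Green--Tao, the ring $\Z/q\Z$ carries a rich lattice of ideals, so $SL_d(\Z/q\Z)$ has many normal subgroups coming from congruence conditions, and approximate subgroups can plausibly concentrate on their preimages. My strategy is a multiscale approach: factor $q = \prod p^{e_p}$, use the Chinese remainder isomorphism $\pi_q(G) \cong \prod \pi_{p^{e_p}}(G)$ (valid after removing the fixed bad modulus $q_0$), and split the argument into a squarefree regime and a prime-power regime, analysing the mixing between the factors separately.

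For the squarefree regime one needs a product theorem on $\prod_i SL_d(\F_{p_i})$ that is uniform over the choice of primes and robust against concentration on subgroups obtained by projecting to sub-products; this requires a Larsen--Pink style classification uniform in $p$, combined with a sum--product estimate in product rings that forces escape from mixed-characteristic subgroups. For the prime-power regime, one passes through the $p$-adic logarithm and exponential to transfer the problem to the Lie algebra $\fsl_d(\Z/p^e\Z)$, where additive combinatorics in the ring $\Z/p^e\Z$ supplies the needed commutator growth. Non-concentration (i) is proved in parallel by an escape-from-subvarieties argument, bounding the probability of the walk landing near preimages of proper algebraic subgroups of $SL_d$ uniformly in $q$. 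Once all of these ingredients are in hand, the Bourgain--Gamburd iteration produces the required spectral gap with constants depending only on $S$.
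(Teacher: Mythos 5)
Your overall frame (Bourgain--Gamburd: non-concentration, quasirandomness, $\ell^2$-flattening, plus strong approximation for the surjectivity statement) matches the paper's, and your CRT split into a squarefree part and a part with high prime powers is also how the paper organizes the modulus ($Q_s$ versus $Q_l$). The squarefree regime you describe is essentially Theorem A (Varj\'u combined with Breuillard--Green--Tao / Pyber--Szab\'o), which the paper uses as a black box, so that half is fine modulo citation. The genuine gap is in your prime-power regime. Transferring to the Lie algebra by the $p$-adic logarithm and invoking ``additive combinatorics in $\Z/p^e\Z$'' is exactly the route of Bourgain--Gamburd for $SL_d(\Z/p^n\Z)$, but those arguments (and their constants) are for a \emph{fixed} prime $p$; nothing in your sketch explains how to make the flattening/commutator-growth step uniform in $p$, nor how to treat a modulus $Q_l$ that is simultaneously divisible by many different primes, each with a large exponent. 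This uniformity is precisely the difficulty the paper is written to overcome, and you have not supplied (or even identified) an ingredient that could deliver it. Note also that quasirandomness is much weaker here than in the prime case: a faithful representation of $SL_d(\Z/q\Z)$ has dimension only about $q/3$, i.e.\ roughly $|G_q|^{1/(d^2-1)}$, so the endgame cannot absorb a non-uniform product theorem.

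The paper's key new ingredient, absent from your proposal, is the quantitative equidistribution theorem of Bourgain--Furman--Lindenstrauss--Mozes for linear actions on the torus (Theorem B). Applied to the adjoint action of $G$ on $\fsl_d$, it gives Fourier-coefficient decay, uniform in the modulus $q$, for the distribution of conjugates $\pi_q(g v_0 g^{-1})$ of a single well-chosen element $v_0$ (Lemma \ref{lm_refin} and Corollary \ref{cr_conj}); combined with an elementary commutator-counting lemma in $\fsl_d(\Z/q\Z)$ (Lemma \ref{lm_sld}), this produces, level by level in the central filtration $\Ga_{q^i}/\Ga_{q^{i+1}}$, a large subset of $\pi_{Q_l}(\Ga_{Q'})$ from bounded products of $A$ (Proposition \ref{pr_large}). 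A further nontrivial point you do not address is how the two regimes are glued: the paper must manufacture an element $x_0\in A$-products lying in $\Ga_{Q'}$ for the squarefree modulus $Q'$ while $q\|(x_0-1)$ for a divisor $q$ of $Q_l$ with exponents in a prescribed window ($c_1<E_p(q)<c_2$), which requires the escape estimate $\chi_S^{(l)}(\Ga_q)<q^{-c_3}$ and a delicate, $Q$-dependent but uniformly bounded choice of parameters (Section \ref{sc_proof}). Without the BFLM input (or a substitute giving $p$-uniform growth at high prime powers) and without this gluing step, the proposed proof does not go through.
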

We remark that if $G$ is not Zariski dense,
then $\pi_q(S)$ does not generate
$SL_d(\Z/q\Z)$ for any $q$.
Several papers are devoted to the study of this problem, see
\cite{BG1}--\cite{BGS}, \cite{Var}, each
obtaining the above statement in some special cases.
More precisely, \cite{BG1} established the case when $d=2$ and
$q$ is prime, \cite{BG2}--\cite{BG3} deals with
moduli $q=p^m$ the powers of
fixed prime $p$, and \cite{BGS} settled the case when $d=2$ and $q$
is square-free.
In \cite{Var} the statement
was proved for any $d$ when $q$ restricted to the
set of square-free integers under the assumption that there is a $\d>0$
depending on $d$ such that for any prime $p$ and for any generating set
$A\subset SL_d(\Z/p\Z)$, we have
\be
\label{eq_Helfgott}
|A.A.A|\ge\min\{|A|^{1+\d},SL_d(\Z/p\Z)\}.
\ee
Here, and everywhere in this paper we write $A.B$ for the set of all products
$gh$ where $g\in A$ and $h\in B$.
Helfgott proved (\ref{eq_Helfgott}) first for $d=2$
\cite[Key Proposition]{Hel} and later for $d=3$ \cite[Main Theorem]{He2}.
Very recently Breuillard, Green and Tao; and Pyber and Szab\'o
obtained this result in great generality, in particular for arbitrary $d$.
For further details we refer to the papers \cite{BGT} and \cite{PS}.
Therefore combining the result of \cite{BGT} or
\cite{PS} with \cite{Var}, we now have
unconditionally the following
\begin{thma}[Breuillard-Green-Tao;Pyber-Szab\'o;Varj\'u]
Let $S\subset SL_d(\Z)$
be finite and symmetric.
Assume that $S$ generates a subgroup $G<SL_d(\Z)$
which is Zariski dense in $SL_d$.

Then there is an integer $q_0$ such that
$\GG(SL_d(\Z/q\Z),\pi_q(S))$ form a family of expanders
when $S$ is fixed and $q$ ranges over square-free integers prime to $q_0$.
\end{thma}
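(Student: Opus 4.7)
The statement is a direct combination of the product theorem (\ref{eq_Helfgott}), which now holds for all $d$ by \cite{BGT} and \cite{PS}, with the machinery of \cite{Var}, which converts (\ref{eq_Helfgott}) into square-free expansion via the Bourgain-Gamburd method. Set $G_q=\pi_q(G)$; strong approximation gives $G_q=SL_d(\Z/q\Z)$ for $q$ coprime to a fixed $q_0$, and the CRT decomposition $G_q\cong\prod_{p\mid q}SL_d(\F_p)$ underlies everything that follows.

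Let $\mu$ denote the uniform probability measure on $\pi_q(S)$. A uniform spectral gap for $\GG(G_q,\pi_q(S))$ is equivalent to an $L^2$-flattening estimate
\[
\|\mu^{(\ell)}-u_{G_q}\|_2\le|G_q|^{-1/2-c}
\]
for some $c>0$ and all $\ell\ge C\log q$, the equivalence being provided by the quasi-randomness of $G_q$ (every nontrivial irreducible representation has dimension at least $|G_q|^{\eta}$, by the Landazuri-Seitz bounds applied at each prime factor of $q$). So the task is to establish this $L^2$-bound.

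The flattening is produced by iterating a single decay step: if $\|\mu^{(\ell)}\|_2^2\ge|G_q|^{-1+\kappa}$ with $\kappa>0$ small, then $\|\mu^{(\ell+O(1))}\|_2^2\le|G_q|^{-\eta}\|\mu^{(\ell)}\|_2^2$. The decay step proceeds by applying Balog-Szemer\'edi-Gowers together with Pl\"unnecke to a level set of $\mu^{(\ell)}*\mu^{(\ell)}$, extracting an approximate group $A\subset G_q$ with $|A.A.A|\le|A|^{1+O(\kappa)}$. To contradict (\ref{eq_Helfgott}) applied prime by prime, one needs a preliminary \emph{escape from subvarieties} input: for $\ell_0=O(\log q)$ and every proper subgroup $H<G_q$ compatible with the CRT decomposition (including its ``twisted'' variants coming from exceptional isomorphisms between factors $SL_d(\F_p)$ for distinct primes), $\mu^{(\ell_0)}(gH)\le|G_q|^{-\delta}$ uniformly in $g$. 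This non-concentration comes from Zariski-density of $G$ via an effective strong approximation argument, and forces $\pi_p(A)$ to generate $SL_d(\F_p)$ for every $p\mid q$; (\ref{eq_Helfgott}) then yields the desired contradiction after a CRT inclusion-exclusion that compares $|A|$ to the product of its prime projections.

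The \emph{main obstacle} is exactly this transition from prime to square-free: the product theorem is a statement about a single $SL_d(\F_p)$, but the flattening must be run in the whole product. A subset of $\prod_pSL_d(\F_p)$ can be substantially smaller than $\prod_p\pi_p(A)$, so the prime projections alone do not immediately contradict (\ref{eq_Helfgott}); one has to handle ``diagonal'' concentration across primes and propagate the product-theorem estimate through the inclusion-exclusion without losses that accumulate. Surmounting this is the main technical content of \cite{Var}; once it is available, $O(\log q)$ iterations of the decay step deliver the required $L^2$-bound and hence the expansion.
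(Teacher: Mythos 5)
Your proposal coincides with the paper's own treatment: Theorem A is obtained there simply by citing the conditional square-free result of \cite{Var} together with the product theorem of \cite{BGT} or \cite{PS}, which is exactly the combination you describe. The paper offers no further argument, so your sketch of the Bourgain--Gamburd machinery inside \cite{Var} (quasirandomness, $L^2$-flattening, Balog--Szemer\'edi--Gowers, escape from subgroups, and the CRT/prime-to-square-free issue) is additional exposition of the cited black box rather than a different route.
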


We are going to use the above theorem as a black box in our proof
as well as the following result.
Consider a group $G<SL_d(\R)$.
We say that it is strongly irreducible, if any subspace of $\R^d$
which is invariant under some finite index subgroup of $G$ is either
of dimension $0$ or $d$.
We say that $G$ is proximal if there is an element $g\in G$ with a unique
eigenvalue of maximal modulus, and this eigenvalue is of multiplicity one.
If $G<SL_d(\Z)$, then we can study its action on the torus $\R^d/\Z^d$.
Bourgain, Furman, Lindenstrauss and Moses
\cite{BFLM} proved a strong quantitative
estimate about the equidistribution of orbits of this action.
\begin{thmb}[Bourgain, Furman, Lindenstrauss, Moses]
Let $G<SL_d(\Z)$ be strongly irreducible and proximal, and let $\nu$ be
a probability measure supported on a finite set of generators of $G$.

Then for any $0<\l<\l_1(\nu)$ there is a constant $C=C(\nu,\l)$
so that if for some $a\in\R^d/\Z^d$ and $b\in\Z^d\sm\{0\}$, we have
\[
\int_g e(\langle ga,b\rangle)d\nu^{(l)}(g)>t>0,
\quad{\rm with}\quad
l>C\log(2\|b\|/t),
\]
then $a$ admits a rational approximation
\[
\|a-p/q\|<e^{-\l l}
\quad{\rm with}\quad
|q|<(2\|b\|/t)^C.
\]
\end{thmb}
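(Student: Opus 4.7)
I would run a dual random walk argument on the frequency side. Writing $\wh f(b):=\int e(\langle ga,b\rangle)\,d\nu^{(l)}(g)$ and decomposing $\nu^{(l)}=\nu^{(k)}*\nu^{(l-k)}$ for a $k\le l$ to be optimized, the triangle inequality gives
\[
|\wh f(b)|\le\int \Bigl|\int e(\langle g'ga,b\rangle)\,d\nu^{(l-k)}(g')\Bigr|\,d\nu^{(k)}(g)=\int|\wh h_a(g^Tb)|\,d\nu^{(k)}(g),
\]
where $\wh h_a(c):=\int e(\langle g'a,c\rangle)\,d\nu^{(l-k)}(g')$. If $|\wh f(b)|>t$, then by Markov, on a $\nu^{(k)}$-set of $g$'s of mass $\gea t$ one has $|\wh h_a(g^Tb)|>t/2$. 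So a single Fourier witness at time $l$ spawns an entire orbit of witnesses at time $l-k$, located at the transposed-walk images $g^Tb$.

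Next I would analyse the geometry of this orbit using the dual random walk driven by $\nu^T$. Strong irreducibility and proximality of $G$ pass to the transpose, so Furstenberg--Goldsheid--Margulis theory applies and gives positive top Lyapunov exponent $\l_1(\nu^T)=\l_1(\nu)$ together with exponential large-deviation control: off a set of $\nu^{(k)}$-mass $e^{-ck}$, one has $\log\|g^Tb\|\in[(\l_1-\e)k,(\l_1+\e)k]$ and the direction $g^Tb/\|g^Tb\|$ lies close to the Furstenberg attractor. Hence the orbit supplies a well-spread family of frequencies $c\in\Z^d$, distributed across a wide range of scales (up to $e^{\l_1 k}\|b\|$) and essentially all directions in $\R^d$.

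The main obstacle is the rigidity step: having a rich family of $c$ with $|\wh h_a(c)|>t/2$ must be shown to force $a$ within $e^{-\l l}$ of some rational $p/q$ of height $|q|\le(2\|b\|/t)^C$. I would prove this via a discretized sum-product / $L^2$-flattening argument in the Bourgain style, converting multiplicative structure in the transposed orbit into additive arithmetic structure of $a$, and so pinning $a$ to a low-height rational. Choosing $k$ as an appropriate fraction of $l$ so that the dual-walk spreading is complete while $l-k$ still exceeds the threshold $C\log(2\|b\|/t)$, and then absorbing constants, yields the claimed $C=C(\nu,\l)$ for any prescribed $\l<\l_1(\nu)$. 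The sum-product/rigidity input is the genuinely hard part; the dual walk and large-deviation pieces are essentially bookkeeping on top of Furstenberg theory.
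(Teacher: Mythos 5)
You should first be aware that the paper contains no proof of Theorem B at all: it is quoted verbatim from the preprint \cite{BFLM} and used as a black box, so there is no internal argument to compare against; your proposal has to be judged as an attempt at the BFLM theorem itself.

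Judged that way, your outline does follow the general shape of the known argument (a single large coefficient $\wh{f}(b)$ propagates, via $\nu^{(l)}=\nu^{(k)}*\nu^{(l-k)}$ and Markov, to a positive-mass family of frequencies $g^Tb$; positivity of the top Lyapunov exponent and large deviations for the transposed walk control the size and direction of these frequencies), but it is not a proof, because the entire content of the theorem sits in the step you explicitly defer. Getting from ``the Fourier transform of $\nu^{(l-k)}*\delta_a$ is large at a spread-out family of frequencies of size about $e^{\l_1 k}\|b\|$'' to ``$a$ is $e^{-\l l}$-close to a rational $p/q$ with $|q|<(2\|b\|/t)^C$'' is exactly what \cite{BFLM} has to work for: it requires an induction on scales (a bootstrap in which the conclusion at one scale feeds back as a hypothesis at a larger scale, which is what eventually produces the exponentially small error $e^{-\l l}$ for every $\l<\l_1(\nu)$ together with only polynomial loss $(2\|b\|/t)^C$ in the denominator), a structure theorem for measures with many large Fourier coefficients (``granulation''), and a discretized sum--product/projection-type input for multiplicative convolutions of measures on matrices. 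Writing ``I would prove this via a discretized sum-product / $L^2$-flattening argument in the Bourgain style'' names this machinery but supplies none of it, and the single decomposition $\nu^{(l)}=\nu^{(k)}*\nu^{(l-k)}$ with one optimized $k$ cannot by itself yield the stated quantitative conclusion: one pass gives information at essentially one new scale, whereas the $e^{-\l l}$ approximation forces one to iterate the dilation step on the order of $l/\log(2\|b\|/t)$ times while keeping the losses in $t$ multiplicative rather than exponential. You also need the frequencies $g^Tb$ to be genuinely separated (not merely of the right norm), which takes a non-concentration estimate beyond the large-deviation bounds you invoke. So the plan is a reasonable reading of how such theorems go, but the key rigidity step is a genuine gap, not bookkeeping.
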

Here $\nu^{(l)}$ denotes the $l$-fold convolution of the measure $\nu$,
and $e(x)$ is the usual shorthand for the function $e^{2\pi ix}$.

We point out that our argument is
independent of the papers \cite{BG2}--\cite{BG3}, hence we give
an alternative proof for the case when the moduli $q=p^m$
are the powers of
a fixed prime.
We also remark that in a subsequent paper of Salehi Golsefidy
and the second author \cite{SV},
Theorem A will be generalized to the case when
$G$ is not Zariski dense, but the connected component of its Zariski
closure is perfect.
The only difficulty which prevents us from relaxing the requirement of
Zariski density in Theorem \ref{th_genmod} is that the adjoint
representation, for which we apply Theorem B, is neither irreducible
nor proximal in general.
It is an open problem if the Cayley graphs of $SL_n(\Z/q\Z)$ form
an expander family when the generators are arbitrary, not necessarily
the projections of a fixed subset of $SL_n(\Z)$.
A partial result in this direction is given in \cite{BrG}.
Expanding properties of groups of Lie type with respect to random
generators are studied in \cite{BGT2}--\cite{BGT3}.
\section{Notation and outline of the proof}
\label{sc_not}
We introduce some notation that will be used throughout the
paper.
We use Vinogradov's notation $x\ll y$ as a shorthand for $|x|<C y$
with some constant $C$.
Let $G$ be a discrete group.
The unit element of any multiplicatively 
written group is denoted by 1.
For given subsets $A$ and $B$, we denote their product-set by
\[
A.B=\{gh\sep g\in A,h\in B\},
\]
while the $k$-fold iterated product-set of $A$ is denoted by
$\prod_k A$.
We write $\wt A$ for the set of inverses of all elements of $A$.
We say that $A$ is symmetric if $A=\wt A$.
The number of elements of a set $A$ is denoted by $|A|$.
The index of a subgroup $H$ of $G$ is denoted by
$[G:H]$.
Occasionally (especially when a ring structure is present) we
write groups additively, then we write
\[
A+B=\{g+h\sep g\in A,h\in B\}
\]
for the sum-set of $A$ and $B$, $\sum_k A$ for the $k$-fold
iterated sum-set of $A$ and 0 for the unit element.

If $\mu$ and $\nu$ are complex valued functions on $G$, we define
their convolution by
\[
(\mu*\nu)(g)=\sum_{h\in G}\mu(gh^{-1})\nu(h),
\]
and we define $\wt \mu$ by the formula
\[
\wt\mu(g)=\mu(g^{-1}).
\]
We write $\mu^{(k)}$ for the $k$-fold convolution of $\mu$ with itself.
As measures and functions are essentially the same on discrete
sets, we use these notions interchangeably, we will also
use the notation
\[
\mu(A)=\sum_{g\in A} \mu(g).
\]
A probability measure is a nonnegative measure with total mass 1.
Finally, the normalized counting measure on a finite
set $A$ is the probability
measure
\[
\chi_A(B)=\frac{|A\cap B|}{|A|}.
\]

We write $\Ga=SL_d(\Z)$ and denote by $\Ga_q$ the kernel of
$\pi_q:SL_d(\Z)\to SL_d(\Z/q\Z)$

The proof of Theorem \ref{th_genmod} follows the same lines as in
any of the papers \cite{BG1}--\cite{BGS}, \cite{Var}.
This approach goes back to Sarnak and Xue
\cite{SX}; and Bourgain and Gamburd \cite{BG1}.
The new ingredient is the following
\begin{prp}
\label{pr_product}
Let $S\subset\Ga$ be symmetric, and assume that
it generates a group $G$ which is Zariski-dense in $SL_d$.
Then for any $\e>0$ there is a $\d>0$ such that the following hold.
If $A\subset\Ga$ is symmetric and $Q$ and $l$
are integers satisfying
\be
\label{eq_setA}
\chi_S^{(l)}(A)>Q^{-\d},\quad
l>\d^{-1}\log Q
\quad{\rm and}\quad
|\pi_Q(A)|<|\Ga/\Ga_Q|^{1-\e},
\ee
then $|A.A.A|>|A|^{1+\d}$.
\end{prp}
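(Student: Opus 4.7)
The plan is to proceed by contradiction: assume $|A.A.A|\le|A|^{1+\delta}$, so that by the Pl\"unnecke--Ruzsa inequalities $A$ behaves like an approximate group with controlled iterated products. Under this assumption the aim is to contradict either the random-walk concentration $\chi_S^{(l)}(A)>Q^{-\delta}$ or the non-saturation $|\pi_Q(A)|<|\Gamma/\Gamma_Q|^{1-\varepsilon}$, by separating the analysis into the behaviour on the square-free part of $Q$ and on the individual prime-power fibers.

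The first step is to reduce to the case in which $\pi_p(A)$ fills essentially all of $SL_d(\Z/p\Z)$ for every prime $p\mid Q$. If instead $|\pi_p(A)|<|SL_d(\Z/p\Z)|^{1-\varepsilon'}$ for some small $\varepsilon'$ depending on $\varepsilon$ and some $p\mid Q$, then $\pi_p(A)$ is a set to which the Helfgott--Breuillard--Green--Tao--Pyber--Szab\'o product theorem (the engine behind Theorem A) applies, giving $|\pi_p(A).\pi_p(A).\pi_p(A)|\ge|\pi_p(A)|^{1+\delta_0}$. Combining this with the Ruzsa calculus on $\pi_p$-fibers and with the hypothesis $\chi_S^{(l)}(A)>Q^{-\delta}$, one lifts the growth from $\pi_p(A)$ to $A$ itself and obtains $|A.A.A|>|A|^{1+\delta}$, contradicting the standing assumption. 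From here on we may therefore assume that every prime factor of $Q$ is saturated in this sense.

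Once all primes are saturated, the deficit $|\pi_Q(A)|<|\Gamma/\Gamma_Q|^{1-\varepsilon}$ must live inside one of the abelian layers $\Gamma_{p^k}/\Gamma_{p^{k+1}}$ with $p^{k+1}\mid Q$ and $k\ge 1$. These layers are canonically identified with $\fsl_d(\Z/p\Z)$ via $1+p^kX\mapsto X$, and the conjugation action of $G$ factors through the adjoint representation ${\rm Ad}\colon SL_d\to GL(\fsl_d)$. Since $G$ is Zariski-dense in $SL_d$, ${\rm Ad}(G)$ is Zariski-dense in $PGL_d$, hence strongly irreducible and proximal on $\fsl_d(\R)\cong\R^{d^2-1}$, so Theorem B applies to the pushforward of $\chi_S$ on the torus $\fsl_d(\R)/\fsl_d(\Z)$.

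A Fourier expansion on the finite abelian group $\fsl_d(\Z/p^k\Z)$ then converts the deficit of $\pi_Q(A)$ inside such a layer, combined with $\chi_S^{(l)}(A)>Q^{-\delta}$, into a nonzero frequency $b$ and a base point $a$ (arising from a representative coset of the layer) such that $|\int e(\langle ga,b\rangle)\,d\nu^{(l)}(g)|\gg Q^{-O(\delta)}$, where $\nu$ denotes the image of $\chi_S$ under ${\rm Ad}$. Applying Theorem B with $l>\delta^{-1}\log Q$ yields a rational approximation $\|a-p/q\|<e^{-\lambda l}$ with $|q|\ll Q^{O(1)}$; because $a$ is itself of arithmetic type with denominator a power of $Q$, this is incompatible with the exponential accuracy $e^{-\lambda l}\le Q^{-\lambda/\delta}$ once $\delta$ is chosen small enough relative to $\lambda$, producing the desired contradiction. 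The main obstacle I expect is exactly this last transfer: cleanly translating the combinatorial non-saturation into the analytic input required by Theorem B, uniformly across primes $p\mid Q$ and exponents $k\ge 1$, while handling the fact that ${\rm Ad}$ only sees the quotient $SL_d\to PGL_d$ and so contributes some finite torsion that must be absorbed into the auxiliary $q_0$ of Theorem \ref{th_genmod}.
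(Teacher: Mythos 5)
Your outline has two genuine gaps, and they sit exactly where the paper has to work hardest. First, the reduction in your second paragraph does not work: growth of a homomorphic image does not lift to growth of $A$. If $|\pi_p(A).\pi_p(A).\pi_p(A)|\ge|\pi_p(A)|^{1+\d_0}$ for a single prime $p\mid Q$, this is perfectly compatible with $|A.A.A|\le|A|^{1+\d}$, because $|A|$ is in general enormously larger than $|SL_d(\Z/p\Z)|$; the only usable inequality in the fiber ("Ruzsa calculus") direction is of the form $|\prod_4 A|\ge \frac{|A|}{|\pi_p(A)|}\,|\pi_p(A.A.A)|$, which forces merely $|\pi_p(A)|^{\d_0}\le|A|^{O(\d)}$ and yields no contradiction since $|A|$ can be exponential in $l$. (Also the product theorem requires $\pi_p(A)$ to generate $SL_d(\Z/p\Z)$, which you have not arranged.) The paper never argues by transferring growth prime by prime: it proves a covering statement for bounded products of $A$. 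Theorem A applied to $\chi_S^{(l)}$ makes the walk equidistribute mod $Q_s$, so $\pi_{Q_s}(A)$ is automatically large; then Gowers-type quasirandomness (Nikolov--Pyber) gives $\pi_p(A.A.A)=SL_d(\Z/p\Z)$ for all primes outside a small exceptional modulus $q_0$, and Lemmas \ref{lm_Gap1}, \ref{lm_Gap2} together with the commutator identities (\ref{eq_sum})--(\ref{eq_bracket}) push this from mod $p$ to mod $p^2$ and then to $(\prod_{C}A).\Ga_{Q_s^L}\supset\Ga_{q_0^L}$ (Proposition \ref{pr_small}). Your proposal has no mechanism at all for the bounded-but-nontrivial exponents handled there.

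Second, the prime-power step is asserted rather than proved, and the intended contradiction is stated backwards. Theorem B concludes that $a$ has a rational approximation with denominator $\le(2\|b\|/t)^C$; if $a$ is itself rational with denominator a divisor of a power of $Q$, that conclusion is consistent with a large Fourier coefficient (take $a$ of small denominator), so "rationality of $a$ versus accuracy $e^{-\l l}$" is not a contradiction. What is actually needed is the refined statement the paper proves as Lemma \ref{lm_refin}, giving decay $\ll(q/\mathrm{lcm}(q,b))^{-c_0}$ for $a/q$ with $a$ prime to $q$, i.e.\ a lower bound on the denominator forces smallness, uniformly in the frequency $b$. Moreover Theorem B controls the full walk $\chi_S^{(l)}$, not its restriction to $A$: converting walk estimates into a statement about $A$ is done in Corollary \ref{cr_conj} by pushing forward $\chi_S^{(l)}|_A$ under $g\mapsto\pi_q(gv_0g^{-1})$, taking a bounded number of additive convolutions, and using Parseval and Cauchy--Schwarz to lower-bound the support; your sketch gives no route from the deficit of $\pi_Q(A)$ to a single large Fourier coefficient of the restricted measure. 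Finally, the paper must first manufacture the input $v_0=\Psi_q^{q^2}(x_0)$ from an element $x_0\in\prod_{O(1)}A\cap\Ga_{Q'}$ with $q\|(x_0-1)$ and controlled exponents $c_1<E_p(q)<c_2$ (the $g_1,g_2$ construction and the parameter cascade of Section \ref{sc_proof}), then climb the remaining layers by the bracket identity and Lemma \ref{lm_sld}, and only then glue the large-exponent conclusion (Proposition \ref{pr_large}) with the small-exponent one to get $|\pi_Q[\prod_{C(S,\e)}A]|>|\Ga/\Ga_Q|^{1-\e/2}$, which is what (\ref{eq_Helf2}) plays against $|\pi_Q(A)|<|\Ga/\Ga_Q|^{1-\e}$. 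These constructions are the substance of the proof and are absent from your proposal.
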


For the reader's convenience we include an outline how Proposition
\ref{pr_product} implies Theorem \ref{th_genmod}.
More details can be found in any of the papers \cite{BG1}--\cite{BGS},
\cite{Var}, in particular
see sections 3.1 and 5 in \cite{Var}.
One ingredient is \cite[Lemma 15]{Var}
that we recall now, because it will be also used
later.
It is based on the non-commutative version of the
Balog--Szemer\'edi--Gowers theorem, and is essentially contained in
\cite{BG1}.
\begin{lemc}[Bourgain, Gamburd]
Let $\mu$ and $\nu$ be two probability measures on an arbitrary
group $G$ and let $K>2$ be a number.
If
\[
\|\mu*\nu\|_2>\frac{\|\mu\|_2^{1/2}\|\nu\|_2^{1/2}}{K}
\]
then there is a symmetric set $S\subset G$ with
\[
\frac{1}{K^R\|\mu\|_2^2}\ll |S|\ll \frac{K^R}{\|\mu\|_2^2},
\]
\[
|{\textstyle\prod_3 S}|\ll K^R|S|\quad {\rm and}
\]
\[
\min_{g\in S}\left(\wt \mu *\mu\right)(g)\gg \frac{1}{K^R|S|},
\]
where $R$ and the implied constants are absolute.
\end{lemc}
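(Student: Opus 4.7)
The plan is to follow the standard recipe that converts an $L^2$-concentration hypothesis into a small-tripling set: dyadic pigeonholing to extract near-uniform level sets, the non-commutative Balog--Szemer\'edi--Gowers theorem (BSG) to obtain sets with small product set, and a Ruzsa/Pl\"unnecke-style covering argument to produce a single symmetric $S$ with the claimed pointwise lower bound on $\wt\mu*\mu$.

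First I would decompose $\mu$ and $\nu$ into dyadic level sets
\[
A_i=\{g\in G:2^{-i-1}<\mu(g)\le 2^{-i}\},\qquad B_j=\{g\in G:2^{-j-1}<\nu(g)\le 2^{-j}\},
\]
and expand $\|\mu*\nu\|_2^2$ along the resulting bilinear decomposition. Since $\mu,\nu$ are probability measures, only $O(\log(1/\|\mu\|_2)+\log(1/\|\nu\|_2))$ levels carry essentially all the mass, so pigeonholing over the four indices that appear produces dominant sets $A=A_{i_0}$, $B=B_{j_0}$ on which $\mu,\nu$ are essentially uniform, with $|A|\asymp 1/\|\mu\|_2^2$ and $|B|\asymp 1/\|\nu\|_2^2$ (up to $K^{O(1)}$ factors absorbing the logarithmic losses). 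The hypothesis $\|\mu*\nu\|_2>\|\mu\|_2^{1/2}\|\nu\|_2^{1/2}/K$ then forces the multiplicative energy
\[
E(A,B):=\#\{(a_1,b_1,a_2,b_2)\in(A\times B)^2:a_1b_1=a_2b_2\}\gg |A|^2|B|^2/K^{R_1}.
\]

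Next I would apply the non-commutative BSG theorem (Tao's version): from this energy bound one extracts $A'\subset A$, $B'\subset B$ with $|A'|\gg|A|/K^{R_2}$, $|B'|\gg|B|/K^{R_2}$ and $|A'.B'|\ll K^{R_2}\sqrt{|A||B|}$. I would then take for $S$ the \emph{popular-difference} set
\[
S:=\{g\in G:(\wt\mu*\mu)(g)\gg K^{-R}/|A|\}.
\]
The function $\wt\mu*\mu$ is symmetric, so $S$ is symmetric; the pointwise lower bound $(\wt\mu*\mu)(g)\gg 1/(K^R|S|)$ for $g\in S$ is built into the definition; a Cauchy--Schwarz / second-moment argument, combined with $\sum_g(\wt\mu*\mu)(g)=1$ and $\sum_g(\wt\mu*\mu)(g)^2=\|\wt\mu*\mu\|_2^2$, shows that $\wt\mu*\mu$ concentrates on $S$ and yields $|S|\asymp 1/\|\mu\|_2^2$ up to $K^R$. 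Finally, the tripling estimate $|\prod_3 S|\ll K^R|S|$ is extracted from the small product set of $A'$ via Ruzsa's triangle inequality and the Tao--Pl\"unnecke covering lemma for approximate groups.

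The principal technical obstacle is this last step. In a general nonabelian group, small doubling of a set does \emph{not} imply small tripling of the same set, so one cannot simply iterate Ruzsa's inequality; one must genuinely pass from $A'$ to the auxiliary symmetric set $S$ of popular differences, and exploit the approximate-group structure that BSG produces. Keeping careful track of the $K^{O(1)}$ losses through pigeonholing, BSG extraction, and the covering argument yields the absolute exponent $R$ in the statement.
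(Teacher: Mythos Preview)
The paper does not actually prove Lemma~C: it is quoted verbatim from \cite[Lemma~15]{Var} (attributed there to Bourgain--Gamburd) and used as a black box. So there is no ``paper's own proof'' to compare against; the only information the paper supplies is that the argument ``is based on the non-commutative version of the Balog--Szemer\'edi--Gowers theorem.'' Your outline is exactly this standard route---dyadic level-set extraction, non-commutative BSG, then Ruzsa-type manipulations to produce a symmetric small-tripling set---so at the level of strategy you are in line with the cited sources.

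Two technical points in your sketch would need tightening before it becomes a proof. First, the dyadic pigeonholing as you describe it loses a factor polylogarithmic in $1/\|\mu\|_2$, not in $K$; your parenthetical ``absorbing the logarithmic losses'' into $K^{O(1)}$ is not automatic, and the clean statements in the literature handle this either by a more careful popularity argument (\`a la Tao) or by restricting to the $O(K^{O(1)})$ levels carrying non-negligible $L^1$-mass before pigeonholing. Second, you define $S$ as the popular-difference set of $\mu$ but then invoke the small product set of the BSG output $A'$ to bound $|\prod_3 S|$; the link between these two sets is not spelled out. In the actual argument one typically takes $S$ to be (a symmetrization of) $A'$ itself, or $A'^{-1}A'$, and checks the pointwise lower bound on $\wt\mu*\mu$ from the inclusion $A'\subset A_{i_0}$, rather than defining $S$ by a threshold on $\wt\mu*\mu$ and hoping it inherits small tripling. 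Neither issue is fatal---both are resolved in \cite{Var} and \cite{BG1}---but they are precisely the places where a referee would ask for detail.
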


\begin{proof}[Proof of Theorem \ref{th_genmod}]
First we note that by the Tits alternative
\cite[Theorem 3]{Tit}, there is a free subgroup $G'<G$ that is Zariski
dense in $SL_d$.
In light of \cite[Claim 11.19]{HLW},
it is enough to prove the theorem for any set of generators of $G'$,
in particular we can assume that $S$ generates a free group.

Denote by $T=T_q$ the convolution operator
by $\chi_{\pi_q(S)}$ in the regular representation of $\Ga/\Ga_q$.
I.e. we write $T(\mu)=\chi_{\pi_{q}(S)}*\mu$ for $\mu\in l^2(\Ga/\Ga_q)$.
We will show that there are constants $c<1$ and $C\ge1$ independent of $q$
such that $T$ has at most $C$ eigenvalues larger than $c$.
If $\mu\in l^2(\Ga/\Ga_q)$ is constant on cosets of $G/(G\cap\Ga_q)$,
then $T(\mu)=\mu$, hence $[\Ga/\Ga_q:G/(G\cap\Ga_q)]<C$.
Consider $\Ga/\Ga_{q_1\cdots q_k}$ for such relatively
prime integers $q_i$ for which
$[\Ga/\Ga_{q_i}:G/(G\cap\Ga_{q_i})]>1$.
Our claim implies $k<C$, so if $q_0$ is the product of a maximal family of
such $q_i$, then $\pi_{q}(S)$ indeed generates $\Ga/\Ga_q$ when
$q$ is prime to $q_0$.
Moreover, consider the subspace
$l^2_0(G/(G\cap\Ga_q))\subset l^2(\Ga/\Ga_q)$,
i.e. the space of functions supported on $G/(G\cap\Ga_q)$
with integral 0.
This space is invariant for $T$ and
by a result of Dodziuk \cite{Dod}; Alon \cite{Alo}; and Alon
and Milman \cite{AM} (see also \cite[Theorem 2.4]{HLW})
$\GG(G/(G\cap\Ga_q),\pi_q(S))$ is a family of expanders
if and only if we have $\l<c<1$ for all eigenvalues of $T$
on $l^2_0(G/(G\cap\Ga_q))\subset l^2(\Ga/\Ga_q)$
for some constant $c$ independent of $q$.
This follow from our claim, since all eigenvalues of $T_{q_1}$
is an eigenvalue of $T_{q_2}$ when $q_1|q_2$, and the second
eigenvalue of a connected graph is less than 1.

Consider an eigenvalue $\l$ of $T$, and let $\mu$ be a corresponding
eigenfunction.
Consider the irreducible representations of $\Ga/\Ga_q$; these
are subspaces of $l^2(\Ga/\Ga_q)$ invariant under $T$.
We can assume that the irreducible representation $\rho$
that contains $\mu$
is faithful, otherwise we can consider the quotient by the kernel,
and we can replace $q$ by a smaller integer.
By \cite[Lemma 7.1]{BG2}
any faithful representation of $SL_2(\Z/q\Z)$ is of dimension
at least $q/3$.
Considering the restriction of a faithful representation of
$\Ga/\Ga_q$ to an appropriate subgroup isomorphic to
$SL_2(\Z/q\Z)$, we can get the same bound.
This in turn implies that the multiplicity of $\l$ in $T$ is at least
$q/3$, since the regular representation $l^2(\Ga/\Ga_q)$
contains $\dim(\rho)$ irreducible components isomorphic to $\rho$.

Using this bound for the multiplicity, we can bound $\l^{(2l)}$
by computing the trace of $T^{2l}$ in the standard basis:
\[
\l^{2l}\le \frac{3}{q} \Tr(T^{2l})=\frac{3}{q}
|\Ga/\Ga_q|\|\pi_q[\chi_S^{(l)}]\|_2^2,
\]
where $\|\cdot\|_2$ denotes the $l^2$ norm over the finite set
$\Ga/\Ga_q$.
This proves the theorem, if we can show that
\[
\|\pi_q[\chi_S^{(l)}]\|_2\ll|\Ga/\Ga_q|^{-1/2+1/10d^2}
\]
for some $l\ll \log q$.
For this, we first note that there is an integer $l_0\ge c_0\log q$
(if $q$ is large enough)
such that all entries of all elements of $\prod_{2l_0}S$
is less then $q$.
Then for $g\in\prod_{2l_0}S$, $\pi_q(g)=1$ implies $g=1$.
By Kesten's bound \cite{Kes} we get
\be
\label{eq_escp}
\pi_q[\chi_{S}^{(2l_0)}](1)\le\left(\frac{4|S|-4}{|S|^2}\right)^{l_0}.
\ee
This in turn implies
\[
\|\pi_q[\chi_S^{(2l_0)}]\|_2\ll|\Ga/\Ga_q|^\e
\]
for some $\e>0$ depending on $c_0$ and $|S|$.
Next we claim that there is a $\d=\d(\e)>0$ such that if
\[
\|\pi_q[\chi_S^{(2l)}]\|_2>|\Ga/\Ga_q|^{-1/2+1/10d^2},
\]
with $l>\d^{-1}\log q$ then
\be
\label{eq_flatening}
\|\pi_q[\chi_S^{(4l)}]\|_2<\|\pi_q[\chi_S^{(2l)}]\|_2^{1+\d}.
\ee
Applying this repeatedly we can get the theorem.

Assume to the contrary that $(\ref{eq_flatening})$ does not hold.
Then using Lemma C,
we get that there is a symmetric set $A\subset\Ga$ with
\[
\chi_{S}^{(4l)}(A)>q^{-\d_2}\quad {\rm and}\quad
|\pi_q(A)|<|\Ga/\Ga_q|^{1-1/10d^2}
\]
and $|A.A.A|<|A|^{1+\d_2}$,
where we can make $\d_2$ arbitrarily small if the $\d$ for which
(\ref{eq_flatening}) does not hold is small enough.
This contradicts to Proposition \ref{pr_product}.
\end{proof}

The rest of the paper is devoted to the proof of Proposition
\ref{pr_product}.
From now on $S$, $A$, $Q$, $l$, and $\e$ will be fixed and
we assume that they satisfy the hypothesis of the proposition.
We also assume that $\d$ is sufficiently small, and it will depend
on $S$ and $\e$.
We aim to prove that 
\[
|\pi_Q(A.A\ldots A)|\ge |\Ga/\Ga_Q|^{1-\e/2},
\]
where the number of factors in the product set $A.A\ldots A$
is bounded by a constant depending on $S$ and $\e$.
This proves the proposition, since by \cite[Lemma 2.2]{Hel} we have
\be
\label{eq_Helf2}
|\textstyle\prod_{l}A|<\left(\frac{|A.A.A|}{|A|}\right)^{l-2}|A|.
\ee

There are some key properties of the groups $\Ga/\Ga_Q$
that we will use in several places in the paper.
These were also exploited in the papers \cite{BG1}, \cite{BG2},
\cite{Din} and \cite{GS}.
For on integer $q$, denote by $\fsl_d(\Z/q\Z)$ the $d\times d$
matrices with trace 0 and entries in $\Z/q\Z$, and we write
$[u,v]=uv-vu$ for the Lie-bracket of $u,v\in\fsl_d(\Z/q\Z)$.
For $g\in\Ga$ and integers $q_1|q_2$, write
\[
\Psi_{q_1}^{q_2}(g)=\pi_{q_2/q_1}\left(\frac{g-1}{q_1}\right).
\]
It is an easy calculation to check that if $q_2|q_1^2$, then
\[
\Psi_{q_1}^{q_2}:\Ga_{q_1}/\Ga_{q_2}\to\fsl_d(\Z/(q_2/q_1)\Z)
\]
is a bijection and satisfies the following identities:
\bea
\label{eq_sum}
\Psi_{q_1}^{q_2}(xy)=\Psi_{q_1}^{q_2}(x)+\Psi_{q_1}^{q_2}(y),\\
\Psi_{q_1}^{q_2}(gxg^{-1})
=\pi_{q_2/q_1}(g)\Psi_{q_1}^{q_2}(x)\pi_{q_2/q_1}(g^{-1}),
\label{eq_adjoint}
\eea
for integers $q_1|q_2|q_1^2$, $x,y\in\Ga_{q_1}$ and $g\in\Ga$,
furthermore
\be
\label{eq_bracket}
\Psi_{q_1q_2}^{q_1q_2q_3}(xyx^{-1}y^{-1})=
[\Psi_{q_1}^{q_1q_3}(x),\Psi_{q_2}^{q_2q_3}(y)],
\ee
for integers $q_1,q_2,q_3$ with $q_3|q_1$ and $q_3|q_2$
and for $x\in\Ga_{q_1}$ and $y\in\Ga_{q_2}$.

We introduce some further notation that will be used henceforth.
Write
\[
Q=\prod_{p|Q}p^{m_p}
\]
for the factorization of $Q$.
If $q|Q$ and $p|q$ is a prime, we write
\[
E_p(q)=\max_{m:p^m|q} \frac{m}{m_p}\quad{\rm and}\quad
w(q)=\sum_{p|q}m_p\log p.
\]
Intuitively, $w(q)$ measures, "how large piece" of $Q$ we
can recover from $q$ by taking a sufficiently high power of it.
Finally if $a$ is an integer, or more generally a vector or matrix
with integral entries, then we write $q\| a$ if $q|a$ and
$pq\nmid a$ for any prime $p|q$.
\section{Primes with small exponents}
In this section we prove Proposition \ref{pr_product}
in the case, when the exponents
of the prime factors of $Q$ are bounded.
Set
\[
Q_s=\prod_{p|Q:m_p\le L}p,
\]
where $L$ is an integer whose value will be set later depending on $\e,S$
and $Q$, but it can be bounded by a constant depending on
$\e$ and $S$ only.
We prove
\begin{prp}
\label{pr_small}
Let $S\subset\Ga$ be symmetric, and assume that
it generates a group $G$ which is Zariski-dense in $SL_d$.
Then for any $\e_1>0$ there is a $\d_1>0$ such that the following hold.
If $A\subset\Ga$ is symmetric and $Q$ and $l$
are sufficiently large integers satisfying
\[
\chi_S^{(l)}(A)>Q^{-\d_1}\quad{\rm and}\quad l>\d_1^{-1}\log Q
\]
then there is an integer $q_0<Q^{\e_1}$ such that
\[
(\textstyle\prod_{C(d,L)}A).\Ga_{Q_s^L}\supset\Ga_{q_0^L}
\]
holds for any integer $L$.
\end{prp}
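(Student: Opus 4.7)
The plan is to combine Theorem A (expansion modulo squarefree moduli) with a bootstrap argument powered by the commutator identities (\ref{eq_sum})--(\ref{eq_bracket}). First, I would isolate an exceptional modulus $q_0$, whose prime divisors include (i) the finitely many bad primes of Theorem A for which $\pi_p(G)\ne SL_d(\Z/p\Z)$, and (ii) primes $p\mid Q$ where $\pi_p(A)$ is contained in a proper subgroup of $SL_d(\F_p)$. A density-counting argument bounds the product of type (ii) primes: each such prime forces $|\pi_p(A)|/|SL_d(\F_p)|$ to drop by a factor of at least $p^{-c}$ for some $c=c(d)>0$, while the total log-density loss across the squarefree divisors of $Q$ is bounded by $\d_1\log Q$ via the hypothesis. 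Choosing $\d_1$ small relative to $\e_1$ then yields $q_0<Q^{\e_1}$.

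Next I would apply Theorem A to the squarefree modulus $q=Q_s/\gcd(Q_s,q_0^\infty)$. The spectral gap from expansion guarantees that $\pi_q(\chi_S^{(l)})$ is exponentially close to uniform on $SL_d(\Z/q\Z)$ once $l>\d_1^{-1}\log Q$; combined with $\chi_S^{(l)}(A)>Q^{-\d_1}$ this forces $|\pi_q(A)|\ge Q^{-\d_1}|SL_d(\Z/q\Z)|$. Since no type (ii) prime survives in $q$, applying the product theorem (Helfgott-BGT-PS) prime-by-prime and combining via CRT gives $\pi_q(\prod_{C_1}A)=SL_d(\Z/q\Z)$ for some $C_1=C_1(S,\e_1)$.

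The heart of the proof is the lift from level $q$ to level $q^L$. For each $g\in SL_d(\Z/q\Z)$ I choose a preimage $\wt g\in\prod_{C_1}A$. Then for any $g,h$ the element $\wt g\wt h(\wt{gh})^{-1}$ lies in $\Ga_q$ and its $\Psi_q^{q^2}$-image is controlled by (\ref{eq_sum}). Taking iterated commutators and conjugating by elements of $\prod_{C_1}A$, the identities (\ref{eq_adjoint}) and (\ref{eq_bracket}) translate the group operations into the adjoint action of $SL_d(\Z/q\Z)$ and the Lie bracket on $\fsl_d(\Z/q^k\Z)$. Since $\fsl_d$ is simple with irreducible adjoint action, a bounded number of such operations produces all of $\Ga_{q^k}/\Ga_{q^{k+1}}$ as $\Psi$-images for each $k<L$. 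Iterating $L$ times covers $\Ga_q/\Ga_{q^L}$ inside $\prod_{C(d,L)}A$, which in view of the definition of $q$ rephrases as $(\prod_{C(d,L)}A)\cdot\Ga_{Q_s^L}\supset\Ga_{q_0^L}$.

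The principal obstacle will be the uniformity in this final lift: the number of products $C(d,L)$ must depend only on $d$ and $L$, not on the individual primes in $q$. This requires $\fsl_d(\F_p)$ to be simple with irreducible adjoint action and with bounded generation constants independent of $p$, so the finitely many primes (for example divisors of $d$) where these properties degenerate must be folded into $q_0$. Since they form a fixed finite set depending only on $d$, their inclusion is harmless.
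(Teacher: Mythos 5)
Your high-level plan (use Theorem A on the squarefree part, absorb exceptional primes into $q_0$, cover $\Ga_{q_0}/\Ga_{Q_s}$ by a bounded product of copies of $A$, then climb to $L$-th powers via the identities (\ref{eq_sum})--(\ref{eq_bracket})) is the paper's, but two of your intermediate steps have genuine gaps. First, the claim that ``since no type (ii) prime survives in $q$, applying the product theorem prime-by-prime and combining via CRT gives $\pi_q(\prod_{C_1}A)=SL_d(\Z/q\Z)$ with $C_1=C_1(S,\e_1)$'' does not work as stated. Knowing only that $\pi_p(A)$ is not contained in a proper subgroup gives generation, not a lower bound $|\pi_p(A)|\ge|SL_d(\F_p)|^{c}$; for small primes $p$ (of which there may be very many) the global bound $|\pi_{Q_s}(A)|\gtrsim Q^{-\d_1}|\Ga/\Ga_{Q_s}|$ says nothing about the density of $\pi_p(A)$, so the number of triplings needed at such a prime is unbounded and $C_1$ cannot be taken to depend only on $S,\e_1$. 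Moreover, even full coverage at every prime separately does not yield coverage modulo the product $q$ for a boundedly long product set: the elements realizing the coverage can differ from prime to prime. The paper avoids both problems by first regularizing $A$ via \cite[Lemma 5.2]{BGS} to obtain $A'\subset A$ whose fibre sizes $K_i$ over the successive prime projections are constant, discarding into $q_0$ precisely the primes where $K_i<3|\Ga/\Ga_{p_i}|/p_i^{1/3}$ (their product is at most $Q^{6\d_1}$), and then applying Gowers' quasirandomness fibrewise, which gives $A.A.A.\Ga_{Q_s}\supset\Ga_{q_0}$ with just three factors, uniformly in the primes.

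Second, and more importantly, your lifting step tacitly assumes what is actually the main difficulty of this proposition: that the element $\wt g\,\wt h\,(\wt{gh})^{-1}\in\Ga_{q}$ produced from a section $\psi$ has nonzero $\Psi_{q}^{q^2}$-image modulo \emph{every} prime $p\mid q$ (equivalently, the section is non-multiplicative mod $p^2$ at every $p$). Nothing in your argument rules out that $\psi$ is multiplicative mod $p^2$ at some primes, in which case the adjoint/bracket machinery produces nothing at those primes and the inclusion $\supset\Ga_{q_0^L}$ fails there. The paper devotes Lemma \ref{lm_Gap1} to exactly this point: for a random pair $x,y$ the probability of $\psi(xy)=\psi(x)\psi(y)$ is at most $p^{-c}$, and the proof of that bound is itself nontrivial (Lemma C, i.e.\ Balog--Szemer\'edi--Gowers, plus Gowers' covering and the growth bound (\ref{eq_Helf2})). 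One then takes the expectation over $p\mid Q_s'$, absorbs the few bad primes into a further exceptional factor $q_0'<Q^{\e_1/2}$, and only then applies Lemma \ref{lm_Gap2} (your appeal to uniform bounded generation of $\fsl_d(\F_p)$ by conjugates is essentially that lemma, so that part is fine in spirit) and the commutator lifting of Dinai type to reach level $L$. Without an argument of the type of Lemma \ref{lm_Gap1}, your construction of a nonzero Lie-algebra element at every surviving prime, and hence the whole lift, is unsupported.
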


In this section we assume that any prime divisor of $Q_s$
is larger than a sufficiently large constant.
The general case follow from this, if we make $q_0$ bigger.
This proposition can be quite easily deduced from Theorem A,
but we also need two Lemmata about the groups $\Ga/\Ga_{p^2}$.

\begin{lem}
\label{lm_Gap1}
Let $p$ be a prime, $\psi:\Ga/\Ga_p\to\Ga/\Ga_{p^2}$ be any function
such that $\pi_p\circ\psi=Id$.
Choose two elements $x,y\in\Ga/\Ga_p$ at random independently with
uniform distribution.
Then the probability of $\psi(xy)=\psi(x)\psi(y)$ is less
than $p^{-c}$ for some absolute constant $c>0$.
\end{lem}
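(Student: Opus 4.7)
Write $G:=\Ga/\Ga_p$, $\wt G:=\Ga/\Ga_{p^2}$, and $V:=\Ga_p/\Ga_{p^2}$, which by $\Psi_p^{p^2}$ is identified with $\fsl_d(\F_p)$ carrying the adjoint action of $G$. The plan is to argue by contradiction: assume $\Pr_{x,y}[\psi(xy)=\psi(x)\psi(y)]\ge p^{-c}$ for $c$ to be chosen small, and rule this out via the non-splitness of the extension $1\to V\to\wt G\to G\to 1$.

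First I would recast the condition cohomologically. Define
\[
\chi(x,y):=\Psi_p^{p^2}\bigl(\psi(xy)(\psi(x)\psi(y))^{-1}\bigr)\in V.
\]
A direct computation using (\ref{eq_sum}) and (\ref{eq_adjoint}) yields the 2-cocycle identity
\[
\chi(xy,z)+\chi(x,y)=\chi(x,yz)+\mathrm{Ad}(x)\,\chi(y,z),
\]
and the event of interest is exactly $\chi(x,y)=0$. As $\psi$ varies, $\chi$ runs over all representatives of a fixed class $[\omega]\in H^2(G;V)$, and the point is that for $p$ large and $p\nmid d$ this class is non-trivial (the extension does not split). The hypothesis asserts $|\chi^{-1}(0)|\ge p^{-c}|G|^2$.

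Next I would apply Lemma C to the measure $\mu:=\chi_{\psi(G)}$ on $\wt G$; one has $\|\mu\|_2^2=1/|G|$, and the lower bound on $|\chi^{-1}(0)|$ translates (via Cauchy--Schwarz on $\psi(G)$) to $\|\mu*\mu\|_2\gea p^{-c}\|\mu\|_2$. Lemma C with $K=p^c$ produces a symmetric $S\subset\wt G$ with $|S|\asymp p^{O(c)}|G|$, $|\textstyle\prod_3 S|\lea p^{O(c)}|S|$, and concentration of $\wt\mu*\mu$ on $S$. Pushing down, $S':=\pi_q(S)\subset G$ inherits small tripling. Applying Theorem A (the product theorem in $SL_d(\F_p)$) to $S'$, two cases arise: either $S'$ is contained (up to a bounded number of translates) in a proper subgroup of $G$, in which case $|S'|\lea |G|/p^{d-1}$ forces $c\ge c_0$ for some absolute $c_0>0$; or $|S'|$ is comparable to $|G|$.

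The main obstacle is the remaining case, where $\pi_p|_S$ is close to injective onto almost all of $G$, so $S$ behaves like an approximate multiplicative section of $\wt G\to G$. Here one must convert small tripling plus the section property into an actual subgroup $H\le\wt G$ with $|H|\gea p^{-O(c)}|G|$ on which $\pi_p|_H$ is an isomorphism---that is, an approximate splitting of the extension on a large subgroup of $G$. This is the step where quantitative non-splitness must enter: for $p$ sufficiently large, no subgroup of $\wt G$ of density $\gea p^{-c}$ maps isomorphically to its image in $G$, because the cocycle class $[\omega]$ restricted to any such large subgroup of $G$ remains non-trivial (the adjoint representation of $G$ on $V\cong\fsl_d(\F_p)$ is irreducible and has no invariants, which rules out the vanishing of the restricted class on a sufficiently large subset). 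Turning this cohomological assertion into a concrete lower bound on the failure of any putative splitting is, I expect, the technical heart of the argument; it would likely be handled by exhibiting explicit pairs $(x,y)$ inside any candidate subgroup whose bracket in $\wt G$, computed via (\ref{eq_bracket}), is forced to disagree with the bracket of any section values. Taking $c<c_0$ closes off both cases and contradicts the initial assumption.
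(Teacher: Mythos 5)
Your opening move coincides with the paper's: push forward the uniform measure by $\psi$, note that the hypothesis gives $\|\nu*\nu\|_2> p^{-c}\|\nu\|_2$, and invoke Lemma C to extract a symmetric $S\subset\Ga/\Ga_{p^2}$ with small tripling on which $\wt\nu*\nu$ concentrates. After that the proposal diverges and has two genuine problems. First, the claim that $S':=\pi_p(S)$ ``inherits small tripling'' is unjustified: from $|\prod_3 S|\le K^R|S|$ you only get $|\prod_3 S'|\le K^R|S|$, and $|S'|$ could a priori be smaller than $|S|$ by a factor up to $|\fsl_d(\F_p)|$, so no tripling bound for $S'$ follows; moreover Theorem A is the expander statement for square-free moduli, not a product theorem in $SL_d(\F_p)$. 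No growth theorem is needed here anyway: since $\pi_p\circ\psi=Id$, the projection $\pi_p(\wt\nu*\nu)$ is exactly $\chi_{\Ga/\Ga_p}$, so $\wt\nu*\nu(S)>p^{-Rc}$ immediately forces $|\pi_p(S)|>p^{-Rc}|\Ga/\Ga_p|$, which is how the paper proceeds.

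Second, and more seriously, the step you yourself flag as ``the technical heart'' --- converting an approximate multiplicative section with small tripling into a contradiction with non-splitness --- is precisely what is left unproved, and the route you sketch (a ``quantitative non-splitness'' of the class in $H^2(\Ga/\Ga_p;\fsl_d(\F_p))$, argued from the absence of invariants in the adjoint module) does not obviously work: vanishing of invariants in $V$ says nothing about $H^2$, and what Lemma C hands you is only an approximate group, not a subgroup to which one could restrict a cocycle. The paper closes this gap with purely qualitative input plus an amplification trick: since every nontrivial representation of $\Ga/\Ga_p$ has dimension at least $p/3$, Gowers' quasirandomness theorem gives $\pi_p(S.S.S)=\Ga/\Ga_p$; since $\Ga/\Ga_{p^2}$ has no subgroup isomorphic to $\Ga/\Ga_p$, there must exist $x_0\in(\prod_9 S)\cap\Ga_p\setminus\Ga_{p^2}$; then, because the stabilizer of the nonzero vector $\Psi_p^{p^2}(x_0)$ under the adjoint action is a proper subgroup of index at least $p/3$, conjugating $x_0$ by $S.S.S$ and using (\ref{eq_adjoint}) produces more than $p/3$ elements of $(\prod_{15}S)\cap\Ga_p$, hence $|\prod_{18}S|>p|\Ga/\Ga_p|/3$, contradicting $|S.S.S|<p^{Rc}|S|$ and $|S|<p^{Rc}|\Ga/\Ga_p|$ via (\ref{eq_Helf2}) once $c$ is small. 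Without this (or some substitute) amplification argument, your proof does not go through.
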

\begin{proof}
Assume that the Lemma fails for some $c>0$.
We will get a contradiction if $c$ is small enough.
Denote by $\nu$ the push-forward of $\chi_{\Ga/\Ga_p}$ by $\psi$.
Then $\nu*\nu(\supp \nu)\ge p^{-c}$.
Hence
\[
\|\nu*\nu\|_2>p^{-c}|\Ga/\Ga_p|^{-1/2}=p^{-c}\|\nu\|_2.
\]
By Lemma C, there is a symmetric $S\subset\Ga/\Ga_{p^2}$
such that $|S|<p^{Rc}|\Ga/\Ga_p|$, $\widetilde\nu*\nu(S)>p^{-Rc}$
and $|S.S.S|<p^{Rc}|S|$.
Since $\pi_p(\widetilde\nu*\nu)$ is the normalized counting measure
on $\Ga/\Ga_p$, we get $|\pi_p(S)|>p^{-Rc}|\Ga/\Ga_p|$.
Any nontrivial representation of $\Ga/\Ga_p$ is of dimension at least
$p/3$, hence by a theorem of Gowers (see also \cite[Corollary 1]{NP})
we have $\pi_p(S.S.S)=\Ga/\Ga_p$.
Since there is no subgroup of $\Ga/\Ga_{p^2}$ which is isomorphic
to $\Ga/\Ga_p$, there is an element $x_0\in (\prod_9 S)\cap
\Ga_p\sm\Ga_{p^2}$.
The stabilizer of $\Psi_{p}^{p^2}(x_0)$ in $\Ga/\Ga_p$
acting by conjugation
on $\fsl_d(\Z/p\Z)$ is a proper subgroup,
hence is of index at least $p/3$.
This shows using (\ref{eq_adjoint}) that
\[
|(\textstyle\prod_{15} S)\cap\Ga_p|\ge|\{gx_0g^{-1}:g\in S.S.S\}|>p/3
\]
and $|\prod_{18} S|>p|\Ga/\Ga_p|/3$.
If $c$ is sufficiently small, then in light of (\ref{eq_Helf2}),
this contradicts to $|S.S.S|<p^{Rc}|S|$ and $|S|<p^{Rc}|\Ga/\Ga_p|$.
\end{proof}

Some ideas of the proof of the following lemma is taken from \cite{Din}.
\begin{lem}
\label{lm_Gap2}
Let $p$ be a prime which is larger than a constant
 depending on $d$, and let $x\in\fsl_d(\Z/p\Z)$ be nonzero.
Then
\[
\textstyle\sum_{100d^2}
\{gxg^{-1},-gxg^{-1}:g\in\Ga/\Ga_p\}=\fsl_d(\Z/p\Z).
\]
\end{lem}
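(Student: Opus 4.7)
The plan is to exploit the absolute irreducibility of the adjoint $SL_d$-module $\fsl_d$ (valid for $p>d$), combine it with a Noetherian dimension-counting argument in algebraic geometry, and pass back to $\F_p$-points via a Lang--Weil-type estimate and a pigeonhole step. Write $V=\fsl_d(\Z/p\Z)$, $O=\{gxg^{-1}:g\in\Ga/\Ga_p\}$, $T=O\cup(-O)$, and $W_N=\sum_N T$; note $T$ is symmetric and invariant under $\Ga/\Ga_p$-conjugation, and $0\in W_2$ via $x+(-x)=0$, so $W_N\subseteq W_{N+2}$ for all $N$. The goal is $W_{100d^2}=V$.

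Working over $\overline{\F_p}$, let $\widetilde O$ be the full geometric orbit of $x$, $\widetilde T=\widetilde O\cup(-\widetilde O)$, and $\widetilde W_N\subseteq V(\overline{\F_p})$ the image of the morphism $\Phi_N\colon SL_d^N\times\{\pm 1\}^N\to V$ given by $(g_\bullet,\epsilon_\bullet)\mapsto\sum_i\epsilon_i g_ix g_i^{-1}$. The Zariski closures $Y_N:=\overline{\widetilde W_N}$ form an ascending chain of $SL_d$-invariant closed subvarieties of $V$, which stabilizes by Noetherianity to some $Y_\infty$. Then the set $\Sigma:=\{v\in V:v+Y_\infty\subseteq Y_\infty\}$ is a Zariski-closed $SL_d$-invariant additive subgroup of $V$ containing $\widetilde T$ (inverses are automatic since $V$ has characteristic $p$, so $-v=(p-1)v$). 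The Zariski tangent space $T_0\Sigma$ is an $SL_d$-invariant $\overline{\F_p}$-subspace of $V$, so by absolute irreducibility it is either $\{0\}$ or all of $V$. In the former case $\Sigma$ would be finite and pointwise fixed by the connected group $SL_d$, forcing $\Sigma\subseteq V^{SL_d}=\{0\}$ and contradicting $x\in\Sigma$; hence $T_0\Sigma=V$ and so $\Sigma=Y_\infty=V$. Since each strict inclusion $Y_{2k}\subsetneq Y_{2k+2}$ in the chain increases dimension by at least one, and $\dim V=d^2-1$, we obtain $Y_N=V$ for some $N\le 2d^2$.

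For this $N$, $\widetilde W_N$ is Zariski-dense in $V$ and so contains a Zariski-open subset $U\subseteq V$. A Lang--Weil-type count applied to the morphism $\Phi_N$, whose generic fiber over $U$ has dimension $(N-1)(d^2-1)>0$, then gives $|W_N|=|\Phi_N(\F_p\text{-points})|\ge (1-C_d p^{-1/2})|V|$. For $p$ greater than a constant depending only on $d$ this exceeds $|V|/2$, so for any $y\in V$ the sets $W_N$ and $y-W_N$ each have size $>|V|/2$ and must meet, placing $y\in W_N+W_N=W_{2N}$. Hence $W_{2N}=V$ with $2N\le 4d^2\le 100d^2$, proving the lemma. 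The main technical obstacle is the algebraic-geometric step --- in particular verifying that $\Sigma$ is Zariski closed as an additive subgroup and analyzing its tangent space via absolute irreducibility of the adjoint representation (which breaks down when $p\mid d$, hence the $p>d$ requirement) --- together with the need for the Lang--Weil constant $C_d$ to depend only on $d$, which is precisely what forces the "$p$ larger than a constant depending on $d$" hypothesis in the statement.
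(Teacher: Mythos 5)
The decisive problem is your last step. From the fact that $\Phi_N$ is dominant --- even from the stronger fact that $\widetilde W_N$ contains a dense open subset of $V$ over $\overline{\F_p}$ --- you cannot conclude that the image of the $\F_p$-points has size $(1-C_dp^{-1/2})|V|$. Lang--Weil produces rational points only on a variety that has a top-dimensional geometrically irreducible component defined over $\F_p$; what you would need is that for all but $O(p^{-1/2})|V|$ of the points $y\in V(\F_p)$ the fibre $\Phi_N^{-1}(y)$ has such a component, and nothing in your argument provides this (positive fibre dimension alone is not enough). The general principle you invoke is simply false: the map $(x,t)\mapsto x^2$ from $\mathbb{A}^2$ to $\mathbb{A}^1$ is dominant with fibres of dimension $1$, yet the image of its $\F_p$-points covers only about half of $\F_p$, because the fibres over non-residues are irreducible over $\F_p$ but not geometrically irreducible and have no rational points. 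So the bound $|W_N|>(1-C_dp^{-1/2})|V|$, and with it the pigeonhole step $|W_N|>|V|/2$, is unsupported. To repair this route you would have to prove either geometric irreducibility of the generic fibre of $\Phi_N$ (and then spread out, with degrees bounded in terms of $d$ only), or a nontrivial bound on the conjugation-orbit exponential sums $\sum_{g\in\Ga/\Ga_p}e(\langle b,gxg^{-1}\rangle/p)$ for $b\neq0$; neither is addressed, and neither is routine, especially when $x$ is nilpotent and its orbit has small dimension. There is also a smaller, repairable gap earlier: the claim that each strict inclusion $Y_{2k}\subsetneq Y_{2k+2}$ raises dimension fails as stated, since $Y_N$ is a union of up to $N+1$ closures $\overline{a\widetilde O-b\widetilde O}$ ($a+b=N$) and a reducible closed set can grow strictly without its dimension increasing; you should instead run the chain on the irreducible sets $D_k=\overline{k\widetilde O-k\widetilde O}$ (closures of images of $SL_d^{2k}$), to which your $\Sigma$-argument applies and which stabilize after at most $\dim V$ strict steps.

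By contrast, the paper's proof is entirely explicit and needs no point counting: after conjugating so that $x(1,2)\neq0$, a handful of conjugations by diagonal matrices (using $\l$ with $\l^d\neq\pm1$, the sign matrix $\diag(-1,-1,1,\dots,1)$, and representations of elements of $\F_p$ as sums of three squares) produces all of $\F_p\cdot E_{1,2}$ inside a bounded sumset, then conjugation gives every $\F_p\cdot E_{i,j}$ and the diagonal directions, and a basis count yields the stated $100d^2$. Your tangent-space/irreducibility analysis of $\Sigma$ is essentially sound (for $p\nmid d$, and using that reduced group schemes over a perfect field are smooth), but as it stands the proof does not close over $\F_p$, which is the whole content of the lemma.
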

\begin{proof}
In this proof we use some special notation that we do not need elsewhere.
We write $\F_p=\Z/p\Z$.
If $x$ is a $d\times d$ matrix, we write
$x(i,j)$ for its $(i,j)$'th entry, and we write $\diag (x)$ for the
vector formed from the diagonal elements of $x$.
If $\a_1,\ldots\a_d\in \F_p$, then we write $\diag(\a_1,\ldots,\a_d)$
for diagonal $d\times d$ matrix containing $\a_1,\ldots,\a_d$ in the
diagonal.
We write
\[
\{\ldots\}=\{gxg^{-1},-gxg^{-1}:g\in\Ga/\Ga_p\}
\]
Note that $\sum_k\{\ldots\}$ is closed under conjugation by elements
of $\Ga/\Ga_p$ for any $k$.
Finally, we write $E_{i,j}$ for the matrix whose $(i,j)$'th entry
is 1 and the rest is 0.
Note that
\[
\{E_{i,j}:i\neq j\}\cup\{E_{i,i}-E_{i+1,i+1}:1\le i<d\}
\]
is a basis for $\fsl_d(\F_p)$.
Replacing $x$ by a conjugate if necessary, we can assume that
$x(1,2)\neq0$.

We show that
\[
\textstyle\sum_{100}\{\ldots\}\supset \F_p\cdot E_{1,2}.
\]
Let
\[
x_1=g_1xg_1^{-1}-g_1^{-1}xg_1\in\textstyle\sum_2\{\ldots\},
\]
for $g_1=\diag(\l^{-d+1},\l,\l,\ldots,\l)$ with any $\l^d\neq\pm1$.
Then $x_1(1,2)\neq1$ and $x(i,j)=0$ if $i\neq1$ or $j\neq1$
or $i=j=1$.
Next, take
\[
x_2=g_2x_1g_2^{-1}+x_1\in\textstyle\sum_4\{\ldots\},
\]
where $g_2=\diag(-1,-1,1,1,\ldots,1)$.
Then $x_2=a_1E_{1,2}+a_2E_{2,1}$ with $a_1\neq 0$.
Choose elements $\l_3,\l_4,\l_5\in\F_p$
such that $\l_3^{-2}+\l_4^{-2}+\l_5^{-2}=0$
but $\l_3^2+\l_4^2+\l_5^2\neq0$.
Define $g_3,g_4,g_5$ by $g_i=\diag(\l_i,\l_i^{-1},1,1\ldots,1)$
and set
\[
x_3=g_3x_2g_3^{-1}+g_4x_2g_4^{-1}+g_5x_2g_5^{-1}\in
\textstyle\sum_{12}\{\ldots\}.
\]
Then $x_3=a_3E_{1,2}$ for some $a_3\in E_{1,2}$.
Now consider an arbitrary $a\in\F_p$ and let $\l_6,\l_7,\l_8\in\F_p$
be such that $\l_6^2+\l_7^2+\l_8^2=a/a_3$.
Observe that
\[
aE_{1,2}=g_6x_3g_6^{-1}+g_7x_3g_7^{-1}+g_8x_3g_8^{-1}\in
\textstyle\sum_{36}\{\ldots\},
\]
where $g_i=\diag(\l_i,\l_i^{-1},1,1\ldots,1)$, which we wanted to show.

Notice that $E_{i,j}$ are conjugate to one another for $i\neq j$, hence
we have also
\[
\textstyle\sum_{100}\{\ldots\}\supset \F_p\cdot E_{i,j},
\]
for $i\neq j$.
Finally note that for any $1\le i<d$ and $\l\in\F_p$,
there is $g\in\sum_{100}\{\ldots\}$
such that $\diag(g)=\diag(\l E_{i,i}-\l E_{i+1,i+1})$.
For example
\[
\diag((1-E_{2,1})E_{1,2}(1-E_{2,1})^{-1})=\diag(E_{1,1}-E_{2,2}).
\]
This finishes the proof, since
\[
\{E_{i,j}:i\neq j\}\cup\{E_{i,i}-E_{i+1,i+1}:1\le i<d\}
\]
is a basis for $\fsl_d(\F_p)$.
\end{proof}

\begin{proof}[Proof of Proposition \ref{pr_small}]
As in the proof of Theorem \ref{th_genmod} we consider the
convolution operator $T(\mu)=\chi_{\pi_{Q_s}(S)}*\mu$ on
$l^2(\Ga/\Ga_{Q_s})$.
As we mentioned there, the expander property of the graphs
$\GG(\Ga/\Ga_{Q_s},\pi_{Q_s}(S))$ is equivalent to the existence
of a constant $c>0$ such that the second largest eigenvalue
of $T$ is less than $c$.
Then by Theorem A, if $l$ is a large constant multiple of $\log Q$
(i.e. if $\d_1$ is small enough), we have
\[
\|\chi_{\pi_{Q_s}(S)}^{(l)}-\chi_{\Ga/\Ga_{Q_s}}\|_2<|\Ga/\Ga_{Q_s}|^{-1},
\]
whence
\[
\chi_S^{(l)}(g\Ga_{Q_s})<2|\Ga/\Ga_{Q_s}|^{-1}
\]
for any $g\in\Ga$.
This implies $|\pi_{Q_s}(A)|>|\Ga/\Ga_{Q_s}|Q^{-\d_1}/2$.

Write $Q_s=p_1\cdots p_n$ for the factorization of $Q_s$.
One can show (see \cite[Lemma 5.2]{BGS})
that there is a set $A'\subset A$
such that for any $g\in A'$ and for any $0\le i<n$ we have
\[
|\{x\in \Ga/\Ga_{p_{i+1}}|\;\exists h\in A'\;
:\;\pi_{p_1\cdots p_i}(h)=\pi_{p_1\cdots p_i}(g)
\;{\rm and}\;\pi_{p_i+1}(h)=x\}|=K_{i+1},
\]
where $K_{i}$ is a sequence of integers satisfying
\[
|A'|=\prod_{i=1}^n K_i\ge (\prod_{i=1}^n(2\log p_i)^{-1})|A|.
\]
This means that
\[
q_0:=\prod_{i:K_i<3|\Ga/\Ga_{p_i}|/p_i^{1/3}}p_i<Q^{6\d_1}
\]
(if $Q$ is large enough).
Since the multiplicity of an irreducible representation of $\Ga/\Ga_p$
is at least $p/3$ (much better bounds are known in fact, see \cite{HH})
we get that
\[
A.A.A.\Ga_{Q_s}\supset A'.A'.A'.\Ga_{Q_s}\supset\Ga_{q_0}.
\] 
This follows from a theorem of Gowers (see also \cite[Corollary 1]{NP})
which implies that $B_1.B_2.B_3=\Ga/\Ga_p$ if
$B_1,B_2,B_3\subset\Ga/\Ga_{p}$, and $|B_i|>3|\Ga/\Ga_p|/p_i^{1/3}$
for each $i$.
For more details see the argument on page 26 in \cite{Var}.

Write $Q'_s=Q_s/q_0$.
The next step is to show that
\[
(\textstyle\prod_{C(d)}A).\Ga_{{Q'_s}^2}\supset\Ga_{{q_0'}^2}
\]
for a not too large integer $q_0'$.
Let $\psi:\Ga/\Ga_{Q'_s}\to\Ga$ be a map
such that $\pi_{Q'_s}\circ\psi=Id$ and
\[
\psi(\Ga/\Ga_{Q'_s})\subset A.A.A.
\]
We choose two elements $x,y\in\Ga/\Ga_{Q'_s}$ independently at random
according to the uniform distribution.
By Lemma \ref{lm_Gap1}, we have
\[
\mathbb{E}(\sum_{p|Q'_s:\pi_{p^2}(\psi(x)\psi(y))
=\pi_{p^2}(\psi(xy))}\log p)<\sum_{p|Q'_s}p^{-c}\log p<\e_1\log Q/2,
\]
(if $Q$ is large enough)
where $\mathbb{E}$ denotes expectation.
Then there is an integer $q_0'<Q^{\e_1/2}$ and there are elements
$x,y\in\Ga/\Ga_{Q'_s}$ such that $\pi_{p^2}(\psi(x)\psi(y))
\neq\pi_{p^2}(\psi(xy))$ for $p|Q'_s/q'_0$.
Write $Q''_s=Q'_s/q'_0$ and
let
\[
z=\Psi_{Q_s''}^{(Q_s'')^2}(\psi(x)\psi(y)\psi(xy)^{-1}).
\]
Then $\pi_p(z)\in\fsl_d(\Z/p\Z)$ is nonzero for every prime $p|Q''_s$.
Using (\ref{eq_sum}) and (\ref{eq_adjoint}),
Lemma \ref{lm_Gap2} shows
that 
\[
(\textstyle\prod_{C(d)}A).\Ga_{({Q''_s})^2}
\supset(\textstyle\prod_{100d^2}
\{g(\psi(x)\psi(y)\psi(xy)^{-1})g^{-1}:g\in A\}).
\Ga_{({Q''_s})^2}=\Ga_{Q''_s}.
\]
It follows from \cite[Lemma 3.1 and Lemma 3.2]{Din}
(see also \cite{GS})
that if $A_1,A_2\subset\Ga$ are sets with
$A_1.\Ga_{p^{i+k}}=\Ga_{p^i}$ and
$A_2.\Ga_{p^{j+k}}=\Ga_{p^j}$, then we have
\[
(\textstyle\prod_2\{a_1a_2a_1^{-1}a_2^{-1}:a_1\in A_1,a_2\in A_2\})
.\Ga_{p^{i+j+k}}=\Ga_{p^{i+j}}.
\]
Note that if $a_1\in A_1$ and $a_2\in A_2$, then
$\pi_{p^{i+j+k}}(a_1a_2a_1^{-1}a_2^{-1})$ depends only on
$\pi_{p^{i+k}}(a_1)$ and on $\pi_{p^{j+k}}(a_2)$.
This implies that  if $B_1,B_2\subset \Ga$ are symmetric
and $B_1.\Ga_{({Q''_s})^2}=\Ga_{Q''_s}$ and
$B_2.\Ga_{({Q''_s})^{j+1}}=\Ga_{({Q''_s})^j}$ for some $j\ge1$ then
\[
(\textstyle\prod_{4} B_1.B_2).
\Ga_{({Q''_s})^{j+2}}\supset\Ga_{({Q''_s})^{j+1}}.
\]
Iterating this we can get the proposition.
\end{proof}

\section{Primes with large exponents}
\label{sc_large}
In this section we prove Proposition \ref{pr_product}
in the case, when the exponents
of the prime factors of $Q$ are all greater than some fixed constant.
Set
\[
Q_l=\prod_{p|Q:m_p> L}p^{m_p},
\]
where $L$ is an integer whose value will be set later depending on $\e,S$
and $Q$, but it can be bounded by a constant depending on
$\e$ and $S$ only.
We prove
\begin{prp}
\label{pr_large}
Let $S\subset \Ga$ be symmetric, and assume that
it generates a group $G$ which is Zariski-dense in $SL_d$.
Then for any $\e_2>0$ and for $1>c_2>c_1>0$ there is a $\d_2>0$
such that the following hold.
Let $A\subset \Ga$ be symmetric and let $Q$ and $l$
be integers satisfying
\[
\chi_S^{(l)}(A)>Q^{-\d_2}\quad{\rm and }\quad l>\d^{-1}_2\log Q.
\]
Assume that there is an element $x_0\in A$ with $x_0\in\Ga_{Q'}$
for some $Q'|Q$
and there is an integer $q|Q_l$ with $q\|(x_0-1)$,
$c_1<E_{p}(q)<c_2$ for $p|q$,
and $w(q)>w(Q_l)-c_2\log Q$.

Then 
\[
|\pi_{Q_l}[(\textstyle\prod_{C(S,c_1)}A)\cap\Ga_{Q'}]|
>Q^{-\e_2-2c_2d^2}Q_l^{d^2-1}.
\]
\end{prp}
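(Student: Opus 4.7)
The plan is to apply Theorem B (BFLM) to the adjoint representation of $G$ on $\fsl_d(\R)\cong\R^{d^2-1}$, use it to show that the conjugate orbit of $X_0:=(x_0-1)/q$ under $\chi_S^{(l)}$ equidistributes in $\fsl_d(\Z/q\Z)$, and then lift this to modulus $N_0:=\prod_{p|q}p^{m_p-a_p}$ (where $p^{a_p}\|q$) via iterated commutators, using the bracket identity (\ref{eq_bracket}). The Zariski density of $G$ in $SL_d$ makes the adjoint action strongly irreducible (an invariant subspace under a finite-index subgroup $G'<G$ is invariant under its Zariski closure $SL_d$, whose adjoint representation is irreducible) and proximal ($G$ meets the regular semisimple locus, and for such $g$ the operator ${\rm Ad}(g)$ has a simple top eigenvalue on the root vector $E_{1d}$), so Theorem B applies with $\nu=\chi_S$. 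The exact-divisibility hypothesis $q\|(x_0-1)$ makes $X_0\in\fsl_d(\Z)$ have an entry that is a unit modulo each $p|q$, so $a:=X_0/q\in\fsl_d(\R)/\fsl_d(\Z)$ has lowest-terms denominator exactly $q$.

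Equidistribution and iteration. For any non-zero $b\in\fsl_d(\Z)$ and $t>0$ with $l>C\log(2\|b\|/t)$, a Fourier coefficient $|\wh{\chi_S^{(l)}*\delta_a}(b)|>t$ would produce, by Theorem B, a rational approximation $p/q_0$ of $a$ with $q_0<(2\|b\|/t)^C$ and $\|a-p/q_0\|<e^{-\l l}$. Since the lowest-terms denominator of $a$ is $q$, any $q_0<q$ forces $\|a-p/q_0\|\ge 1/(q q_0)>Q^{-2c_2}$, while $e^{-\l l}\le Q^{-\l/\d_2}\ll Q^{-2c_2}$ once $\d_2<\l/(2c_2)$, a contradiction. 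Hence for any $(b,t)$ with $(2\|b\|/t)^C<q$ we have $|\wh{\chi_S^{(l)}*\delta_a}(b)|\le t$. Combining this with Plancherel on $\fsl_d(\Z/q\Z)\cong(\Z/q\Z)^{d^2-1}$ and the trivial bound at high frequencies, the pushforward $\mu$ of $\chi_S^{(l)}$ under $g\mapsto{\rm Ad}(g)X_0\bmod q$ satisfies $\|\mu\|_2\ll q^{-(d^2-1)/2}Q^{-\d'}$ for some $\d'=\d'(c_1,c_2)>0$. Restricting to $g\in A$ via $\chi_S^{(l)}(A)>Q^{-\d_2}$ and applying $\Psi_q^{q^2}$ gives $\gg q^{d^2-1}Q^{-\d_2-\d'}$ distinct elements $gx_0g^{-1}\in(\prod_3 A)\cap\Ga_{Q'}$ distinguished modulo $q^2$ (the conjugates lie in $\Ga_{Q'}$ by normality). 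Since $\fsl_d$ is a simple Lie algebra, iterated brackets of $\{{\rm Ad}(g)X_0\}$ span $\fsl_d$ over $\Z$, so repeated application of (\ref{eq_bracket}) transfers equidistribution from level $q^j$ to level $q^{j+1}$; after $k=O(1/c_1)$ iterations one reaches modulus $q^k\ge N_0$ and produces $\gg N_0^{d^2-1}Q^{-O_{c_1}(\d_2+\d')}$ distinct images in $\pi_{Q_l}[(\prod_{C(S,c_1)}A)\cap\Ga_{Q'}]$. The hypotheses $w(q)>w(Q_l)-c_2\log Q$ and $\log q<c_2 w(q)\le c_2\log Q$ combine to give $\log N_0=w(q)-\log q\ge\log Q_l-2c_2\log Q$, hence $N_0^{d^2-1}\ge Q_l^{d^2-1}Q^{-2c_2 d^2}$; choosing $\d_2$ small in terms of $\e_2,c_1,c_2$ concludes.

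The main obstacle is the quantitative interplay between BFLM and the arithmetic of $q|Q_l$: the rational denominator $(2\|b\|/t)^C$ produced by Theorem B must be forced strictly below $q$ for the contradiction to go through, which constrains the useful $(b,t)$-window to $(2\|b\|/t)^C<q$. Extending the resulting low-frequency Fourier bound to an $L^2$ flatness bound on the whole spectrum of $\mu$ — which is what actually counts distinct conjugates — is the delicate step that forces $\d_2$ to be small relative to $c_1,c_2$ and the BFLM constant $C$, and is ultimately what drives the hypothesis $l>\d_2^{-1}\log Q$. A secondary technicality is that $\Psi_q^{qN_0}$ fails to be a homomorphism once $N_0>q$, so Theorem B cannot be applied directly at modulus $N_0$; one must build up through $\Psi_{q^j}^{q^{j+1}}$ a bounded (in $c_1$) number of times using (\ref{eq_bracket}), tracking at each stage that the equidistribution at level $q^j$ survives the bracketing with a polynomial loss in $Q^{\d_2+\d'}$.
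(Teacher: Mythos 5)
Your overall strategy matches the paper's: equidistribute the conjugates $\mathrm{Ad}(g)X_0$ modulo $q$ using Theorem B, pass to $\Gamma_{qQ'}$ via $\Psi_q^{q^2}$, lift through higher powers of $q$ with the bracket identity (\ref{eq_bracket}), and finish with the bookkeeping on $w(q)$, $E_p(q)$. But there is a genuine gap at the step you yourself flag as delicate, and it is precisely the main new idea of this section of the paper. Theorem B only produces a contradiction when the denominator bound $(2\|b\|/t)^C$ it outputs is smaller than $q$; this restricts you to frequencies with $\|b\|$ a small power of $q$. On $\fsl_d(\Z/q\Z)\cong(\Z/q\Z)^{d^2-1}$ almost every frequency $b$ has $\|b\|$ comparable to $q$, and for those your argument gives only the trivial bound $|\widehat\mu(b)|\le 1$. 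Plancherel then yields $\|\mu\|_2^2\le q^{-(d^2-1)}\cdot\bigl(\#\{\text{low }b\}\,t^2+\#\{\text{high }b\}\bigr)\approx 1$, which is the trivial mass bound; the claimed flatness $\|\mu\|_2\ll q^{-(d^2-1)/2}Q^{-\delta'}$ does not follow, and without it you cannot count $\gg q^{d^2-1}Q^{-O(\delta)}$ distinct conjugates mod $q^2$. The paper closes exactly this hole with Lemma \ref{lm_refin}: one passes to the algebra $R$ generated by $S$, lets $G\times G$ act by left and right multiplications on $R$ (strong irreducibility via Wedderburn, proximality via $l_g r_g$), and applies Theorem B on the torus $R^*/\Lambda^*$ to the point $\varphi/q$, where $\varphi(g)=\langle ga,b\rangle$; this converts a large frequency $b$ mod $q$ into a point of exact denominator $q/\gcd(q,b)$ paired against a fixed bounded frequency, giving decay $(q/\gcd(q,b))^{-c_0}$ for \emph{all} $b$. (One then still needs the $C$-fold additive convolution in Corollary \ref{cr_conj} to make these coefficients square-summable before Parseval.) Your proposal contains no substitute for this mechanism.

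A second, lesser gap is the lifting step. Saying that simplicity of $\fsl_d$ makes iterated brackets span, so "equidistribution survives the bracketing with polynomial loss," is not a proof: what is needed is a quantitative statement that if $X\subset\fsl_d(\Z/q\Z)$ has density $1/K$ then an iterated sum of brackets of $X-X$ still has density $\gg K^{-O_d(1)}$, uniformly in the composite modulus $q$. This is the content of Lemma \ref{lm_sld}, proved by counting solutions of $[u_1,u_2]=v$ in $\fsl_2(\Z/p^m\Z)$ and assembling the subalgebras $\fh_{i,j}$ with a divisor-sum argument; without it (or an equivalent), the inductive bound $|\pi_{q^{i+2}}(B_i)|>Q^{-(3d(d-1))^i\e}q^{d^2-1}$ that drives the recursion is unsupported. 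The low-frequency rationality argument, the use of $q\|(x_0-1)$ to guarantee exact denominator $q$, and the final arithmetic giving the exponent $-2c_2d^2$ are all fine and agree with the paper.
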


Theorem B, as it is stated, is only useful
to estimate Fourier coefficients close
to the origin, i.e. those with $\|b\|<Q^{1/C}$.
However it implies the following refined version of itself:
\begin{lem}
\label{lm_refin}
Let $S\subset \Ga$ be symmetric and assume that it generates
a group $G<\Ga$ which acts
a proximally and strongly irreducibly on $\R^d$.
Assume further that any finite index subgroup of $G$ generates
the same $\R$-subalgebra of $Mat_d(\R)$ as $G$.

Then there is a
constant $c_0$ depending only on $S$ such that for any $a,b\in\Z^d$
we have
\[
\int e(\langle \frac{ga}{q},b\rangle)d\chi_S^{(l)}(g)
\ll (q/lcm(q,b))^{-c_0},
\]
if $a$ is prime to $q$ and $l>C\log q$ for some sufficiently
large constant $C$.
\end{lem}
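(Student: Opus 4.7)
The plan is to reduce to Theorem~B via a $\gcd$ manipulation on $b$, and then to exploit the minimal-denominator structure of the resulting rational point on the torus.

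First, because $e(\langle ga/q, b\rangle)$ depends on $b$ only modulo $q$ componentwise, I may assume $\|b\|_\infty \le q$. Let $d_0 = \gcd(q, b_1, \ldots, b_d)$ and write $q = d_0 q_0$, $b = d_0 b_0$, so that $\gcd(q_0, b_0) = 1$ and $\|b_0\|_\infty \le q_0$. The identity $\langle g(a/q), b\rangle = \langle g(a/q_0), b_0\rangle$ rewrites the integral as
\[
F := \int e(\langle g\a, b_0\rangle)\,d\chi_S^{(l)}(g),
\]
where $\a := a/q_0 \in \R^d/\Z^d$. Because $\gcd(a, q) = 1$ and $q_0 \mid q$, the vector $\a$ is in lowest terms, yielding the separation
\[
\|\a - p/Q\|_{\R^d/\Z^d} \ge 1/(q_0 Q)
\]
for every $p \in \Z^d$ and every integer $0 < Q < q_0$.

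Assume for contradiction that $|F| > t := q_0^{-c_0}$ for a small constant $c_0 = c_0(S) > 0$ to be fixed below. Theorem~B applies (strong irreducibility and proximality of the standard representation of $G$ are supplied by the standing hypothesis of Lemma~\ref{lm_refin}); its requirement $l > C_1 \log(2\|b_0\|/t) \ll \log q_0$ is secured since $\|b_0\| \le q_0$ and $l > C\log q \ge C\log q_0$ for $C$ sufficiently large. Theorem~B then produces $p \in \Z^d$ and an integer $Q$ with
\[
\|\a - p/Q\|_{\R^d/\Z^d} < e^{-\l l}, \qquad Q < (2\|b_0\|/t)^{C_1} \le q_0^{C_1(1+c_0)}.
\]
In the range $Q < q_0$, the separation bound forces $1/(q_0 Q) \le e^{-\l l}$, so $l < 2\log q_0/\l$, contradicting $l > C\log q_0$ once $C > 2/\l$.

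The main obstacle is the remaining case $Q \ge q_0$: here $\a = p/Q$ can hold exactly (take $Q = kq_0$, $p = ka$), and Theorem~B alone yields no contradiction. To eliminate this range I plan to iterate. The additional hypothesis that every finite-index subgroup of $G$ generates the same $\R$-subalgebra of $Mat_d(\R)$ as $G$ keeps Theorem~B applicable with uniformly controlled constants after replacing $S$ by a symmetric generating set of a suitable finite-index subgroup of $G$. Running the preceding argument on a short chain of such subgroups rules out each candidate multiple $Q = kq_0$ in the polynomial range up to $q_0^{C_1(1+c_0)}$; only $O(1)$ iterations (depending on $c_0, C_1$) are needed. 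Maintaining uniformity of the Theorem~B constants along this iteration is where the finite-index-subalgebra hypothesis is used essentially, and it is the most delicate step of the argument.
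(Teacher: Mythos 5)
You have correctly isolated the crux (the range $Q\ge q_0$, where $\a=a/q_0$ is itself a rational point of denominator $q_0$), but your plan for closing it does not work, and this is exactly the gap. Once $\a$ is rational with denominator $q_0\le Q$, the conclusion of Theorem~B is satisfied \emph{identically} (take $p/Q=a/q_0$, giving distance $0<e^{-\l l}$), so assuming $|F|>t$ produces no contradiction --- not for $\chi_S$, and not for any measure supported on a finite-index subgroup of $G$ either, since Theorem~B applied to the standard action on $\R^d/\Z^d$ can never say more than ``$\a$ is well approximated by a rational of controlled denominator,'' which is vacuously true here. Passing to finite-index subgroups changes neither the point $\a$ nor the frequency $b_0$, so no iteration of this kind can eliminate the candidate denominators $Q=kq_0$; the hypothesis that every finite-index subgroup of $G$ generates the same $\R$-subalgebra of $Mat_d(\R)$ is not there to provide such an iteration.

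The paper's proof introduces the missing idea: it moves the problem from $\R^d$ to the $\R$-subalgebra $R\subset Mat_d(\R)$ generated by $S$, with the group $G'\cong G\times G$ acting by left and right multiplications on $R$ (and then on the dual $R^*$ with its lattice $\La^*$). One checks this action is strongly irreducible --- invariant subspaces are ideals of the subalgebra generated by a finite-index subgroup, which by hypothesis is all of $R$, and $R$ is simple by Wedderburn --- and proximal (the element $l_gr_g$ for a proximal $g\in G$, with top eigenvector a multiple of $vu^T$, which lies in $R$ as a limit of $\l^{-k}g^k$). Now Theorem~B is applied to the torus point $\f/q\in R^*/\La^*$, where $\f$ is the functional $(g,\f)=\langle ga,b\rangle$; since \emph{both} $a$ and $b$ are prime to $q$, $\f$ is prime to $q$, while the frequency in this dual application is the fixed element $1\in R$, hence bounded. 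Consequently the denominator bound in Theorem~B becomes $(2/t)^C\ll q^{Cc_0}<q$, and a rational approximation of the reduced fraction $\f/q$ within $e^{-\l l}<q^{-2}$ by a fraction of denominator $<q$ is impossible --- which is precisely the contradiction unavailable in your setup. In short: your reduction via $\gcd(q,b)$ and the small-denominator case are fine, but the large-frequency case requires this dualization to the algebra $R$ (swapping the roles of point and frequency so that the frequency is bounded), not an iteration over finite-index subgroups.
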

\begin{proof}
We assume without loss of generality that $b$ is prime to $q$.
Denote by $R$ the $\R$-subalgebra of $Mat_d(\R)$ generated by $S$.
Since $R$ is generated by integral matrices, $\La=Mat_d(\Z)\cap R$
is a lattice in $R$.
Denote by $R^*$ the dual space of $R$ and by $\La^*$ the dual lattice,
i.e. the set of functionals that take integral values on $\La$.
For $g\in R$ and $\f\in R^*$, denote by $(g,\f)$ the pairing between them.

For an element $g\in R$ write $l_g$ and $r_g$ for the left and right
multiplications by $g$ in $R$.
Let
\[
S'=\{l_g,r_g|g\in S\}\subset SL(\La),
\]
where $SL(\La)$ stands for those linear transformations of $R$ with
determinant 1
that preserves $\La$.
Denote by $G'$ the subgroup of $SL(R)$ generated by $S'$.
If $\g=l_{g_1}\ldots l_{g_k}r_{h_1}\ldots r_{h_m}$ is a
typical element of $G'$ and $h\in R$, then we write
\[
\g.h=g_1\cdots g_khh_m\cdots h_1.
\]

We study the action of $G'$ on $R$.
First we show that the action is irreducible.
Assume that $V$ is an invariant subspace, i.e. invariant under
left and right multiplications by elements of $G$.
By the very definition, this means that $V$ is an ideal
of the algebra $R$.
Since $\R^d$ is a simple $R$ module, i.e. there is no subspace
of $\R^d$ invariant under all elements of $R$, by Wedderburn's
theorem (see Lang \cite[Corollary XVII.3.5]{Lan}), $R$ is isomorphic
to the endomorphism ring of a vectorspace over a division algebra.
Therefore $R$ is simple (see Lang \cite[Theorem XVII.5.5]{Lan}),
and $V$ is either $\{0\}$ or $R$.
Hence the action of $G'$ is irreducible.
Now we show that it is actually
strongly irreducible.
Note that $G'$ is naturally isomorphic to the direct product $G\times G$.
Then for any finite index subgroup $H'<G'$ there is a finite index
subgroup $H<G$ such that $H\times H$ is contained in $H'$ under the above
isomorphism.
Since $H$ generates the same $\R$-algebra $R$ as $G$, the same proof
shows that the action of $H'$ is irreducible, too.

Next we show that $G'$ acts proximally.
Let $g\in G$ be an element which is proximal on $\R^d$, and denote
by $\l$ the top eigenvalue, and by $v$ and $u^T$ the corresponding
right and left eigenvectors
with $gv=\l v$ and $u^Tg=\l u^T$.
Now consider the element $\g=l_gr_g\in G'$.
It is easy to see that on $Mat_d(\R)$ it is proximal, and $vu^T$
is an eigenvector corresponding to the top eigenvalue $\l^2$.
We still need to show that $vu^T$ is in the algebra $R$.
For this, note that the rows of $\l^{-k}g^k$ tend to scalar multiples
of $u^T$, and the columns tend to scalar multiples of $v$.
Hence $\lim_{n\to\infty}\l^{-k}g^k\in R$ is a scalar multiple of $vu^T$
proving the claim.

This shows that we can apply Theorem B for the action of $G'$ on $R$
and for the measure $\nu=\chi_{S'}$.
We actually use it for $G'$ acting on $R^*$.
Denote by $g^*$ the adjoint of an element $g\in G'$ acting on $R^*$.
We have
\[
\int e((\g.1,\frac{\f}{q}))d\chi_{S'}^{(m)}(\g)=
\int e((1,\g^*.\frac{\f}{q}))d\chi_{S'}^{(m)}(\g)
\ll q^{-c_0},
\]
where 1 is the unit element of $R$, and
$\f\in\La^*$ is any functional prime to $q$, i.e. $\f/p\notin\La^*$
for any prime $p|q$.
By Theorem B, the failure of this inequality would imply a good
rational approximation of $\f/q$ with denominator less than $q$,
a contradiction.
By induction it is easy to show that the push-forward of the measure
$\chi_{S'}^{(m)}$ under the map $\g\mapsto \g.1$ is
$\chi_{S}^{(m)}$.
We define the functional $\f\in\La^*$ by $(g,\f)=\langle ga,b\rangle$,
where $a$ and $b$ are the same as in the statement of the lemma.
Since $a$ ad $b$ are prime to $q$, so is $\f$.
Then we get
\[
\int e((\g.1,\frac{\f}{q}))d\chi_{S'}^{(m)}(\g)
=\int e(\langle\frac{ga}{q},b\rangle)d\chi_{S}^{(m)}(g)
\]
proving the lemma.
\end{proof}

\begin{cor}
\label{cr_conj}
Let $S\subset \Ga$ be symmetric, and assume that
it generates a group $G$ which is Zariski-dense in $SL_d$.
Then for any $\e>0$ there is a $\d>0$
such that the following hold.
Let $A\subset \Ga$ be symmetric and let $Q$ and $l$
be integers satisfying
\[
\chi_S^{(l)}(A)>Q^{-\d}\quad{\rm and}\quad l>\d^{-1}\log Q.
\]

Then for any integer $q|Q$ and for any $v_0\in\fsl_d(\Z/q\Z)$
such that $p\nmid v_0$ for any prime $p|q$, we have
\[
|\pi_q[\textstyle\sum_{C(S)}\{gv_0g^{-1}:g\in A\}]|>Q^{-\e}q^{d^2-1}.
\]
\end{cor}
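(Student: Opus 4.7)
The plan is to run a Plancherel plus Cauchy--Schwarz argument on the abelian group $V:=\fsl_d(\Z/q\Z)$: push the random walk $\chi_S^{(l)}$ forward by the conjugation map $g\mapsto gv_0g^{-1}$, control the sup-norm of an iterated convolution of the push-forward via a character sum bound obtained from the adjoint analogue of Lemma \ref{lm_refin}, and then extract the desired support lower bound by a mass inequality. Let $\tau$ be the push-forward of $\chi_S^{(l)}$ on $V$ under $g\mapsto gv_0g^{-1}$, and let $\tau_A$ be the push-forward of $\chi_S^{(l)}\restr{A}$, so that $\|\tau_A\|_1=\chi_S^{(l)}(A)>Q^{-\d}$ and $\supp\tau_A^{*C}=\pi_q\bigl({\textstyle\sum_C}\{gv_0g^{-1}:g\in A\}\bigr)$. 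Once I show that $\|\tau_A^{*C}\|_\infty\ll q^{-(d^2-1)}$ for some $C=C(S)$, the inequality $|\supp\tau_A^{*C}|\ge\|\tau_A^{*C}\|_1/\|\tau_A^{*C}\|_\infty\gg Q^{-C\d}q^{d^2-1}$ exceeds $Q^{-\e}q^{d^2-1}$ as soon as $\d<\e/(2C)$ and $Q$ is large.

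The central input is a character sum estimate: for every nonzero $\xi\in V$, writing $q_\xi:=q/\gcd(q,\xi)$ for the effective denominator, I claim that
\[
|\widehat\tau(\xi)|=\Bigl|\int e\bigl(\langle gv_0g^{-1},\xi\rangle/q\bigr)\,d\chi_S^{(l)}(g)\Bigr|\ll q_\xi^{-c_0}
\]
for some $c_0=c_0(S)>0$. I would obtain this by running the argument of Lemma \ref{lm_refin} with the adjoint representation $Ad:G\to SL(\fsl_d(\R))$ in place of the natural one. The hypotheses carry over: $Ad(G)$ preserves the lattice $\fsl_d(\Z)$; it acts strongly irreducibly on $\fsl_d(\R)$, because the adjoint representation of $SL_d$ is irreducible and every finite index subgroup of a Zariski-dense subgroup is again Zariski dense; and it acts proximally, since any Zariski-dense subgroup of $SL_d$ contains an element whose eigenvalues have distinct moduli, for which $Ad$ automatically has a unique largest eigenvalue of multiplicity one. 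After rewriting the character at modulus $q_\xi$ and using the hypothesis that no prime $p|q$ divides $v_0$ (so the reduction of $v_0$ modulo $q_\xi$ remains primitive), the rational approximation conclusion of Theorem B is blocked, yielding the bound as long as $l>\d^{-1}\log Q\ge C\log q_\xi$.

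Granted the character bound, Fourier inversion and grouping characters by the value of $q_\xi$ give
\[
\|\tau^{*C}\|_\infty\le\frac{1}{q^{d^2-1}}\sum_{\xi\in V}|\widehat\tau(\xi)|^C\ll\frac{1}{q^{d^2-1}}\Bigl(1+\sum_{q'\mid q,\,q'>1}(q')^{d^2-1-Cc_0}\Bigr),
\]
since there are at most $(q')^{d^2-1}$ characters with $q_\xi=q'$. Choosing $C=C(S)$ large enough that $Cc_0>d^2$, the inner sum is controlled by a convergent series over divisors, so $\|\tau^{*C}\|_\infty\ll q^{-(d^2-1)}$. Since $\tau_A\le\tau$ pointwise, we also have $\tau_A^{*C}\le\tau^{*C}$ pointwise and inherit the same sup-norm bound, which closes the argument outlined in the first paragraph.

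The hard part of the proof is the character sum estimate in the second paragraph: one must verify that the algebra-action lifting device underlying Lemma \ref{lm_refin} really does work for the adjoint representation, and that the hypothesis $p\nmid v_0$ for all $p\mid q$ is precisely what is needed to block the Theorem B approximation uniformly across all divisors of $q$. The Plancherel plus mass-inequality packaging is then routine.
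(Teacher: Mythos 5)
Your proposal is correct and follows essentially the same route as the paper: both apply Lemma \ref{lm_refin} to the adjoint action on $\fsl_d$ to obtain Fourier decay of the pushforward of $\chi_S^{(l)}$ in terms of the effective denominator (using $p\nmid v_0$ to keep $v_0$ primitive modulo every divisor), take a bounded convolution power $C=C(S)$, and convert this into the support lower bound after dominating the (restricted to $A$) measure by the full pushforward. The only cosmetic difference is that you bound the $\ell^\infty$ norm of the convolution power by Fourier inversion, whereas the paper bounds its $\ell^2$ norm via Parseval and finishes with Cauchy--Schwarz.
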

\begin{proof}
We apply Lemma \ref{lm_refin}
for the adjoint action of $G$ on the Lie-algebra
$\fsl_d(\R)$, i.e. $g\in G$ acts by $x\mapsto gxg^{-1}$.
Denote by $\nu$ the push-forward of $\chi_{S}^{(l)}$ via the
map $g\mapsto \pi_q(gv_0g^{-1})$.
Then Lemma \ref{lm_refin}
gives the following bound for the Fourier coefficients
of $\nu$:
\[
\widehat \nu(b)\ll(q/lcm(q,b))^{-c_0}.
\]
Denote by $\nu^{[C]}$ the $C$-fold additive convolution of $\nu$
with itself.
Then there is a constant $C=C(S)$ such that
\[
(\nu^{[C]}){\widehat{\phantom{a}}}(b)\ll(q/lcm(q,b))^{-d^2}.
\]
By Parseval
\[
\|\nu^{[C]}\|_2^2\ll q^{-d^2+1}\sum_{q_0|q}q_0^{d^2-1}\cdot q_0^{-2d^2}
\ll q^{-d^2+1}.
\]

Let now $\mu$ be the push-forward of $\chi_S^{(l)}|_{A}$
(the normalized restriction of $\chi_S^{(l)}$  to $A$) via the map
$g\mapsto \pi_q(gv_0g^{-1})$.
Then $\mu(x)<Q^{\d}\nu(x)$ and hence $\mu^{[C]}(x)<Q^{C\d}\nu^{[C]}$
for any $x\in\fsl_d(\Z/q\Z)$.
The above bound for $\nu$ gives
\[
\|\mu^{[C]}\|_2^2\ll Q^{C\d}q^{-d^2+1}.
\]
Since
\[
\pi_q[\textstyle\sum_{C(S)}\{gv_0g^{-1}:g\in A\}]
\]
is the support of $\mu^{[C]}$, the claim follows by Cauchy-Schwartz.
\end{proof}

\begin{lem}
\label{lm_sld}
Let $X\subset\fsl_d(\Z/q\Z)$ be a set, and assume that
$|X|>q^{d^2-1}/K$ for some $K>2$.

Then
\[
|\textstyle\sum_{d(d-1)/2}[X-X,X-X]|\gg q^{d^2-1}/K^{3d(d-1)}.
\]
\end{lem}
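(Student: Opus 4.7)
The plan combines a Lie-theoretic decomposition of $\fsl_d$ into copies of $\fsl_2$ with additive Fourier analysis on the group $\fsl_d(\Z/q\Z)$. First I would set $Y=X-X$, so that $Y$ is symmetric, contains $0$, and $|Y|\geq|X|>q^{d^2-1}/K$; in particular $|Y+Y|\leq q^{d^2-1}<K|Y|$, so $Y$ has additive doubling at most $K$ and Pl\"unnecke--Ruzsa controls iterated sums. For each pair $1\leq i<j\leq d$, let $V_{ij}\subset\fsl_d(\Z/q\Z)$ be the three-dimensional submodule spanned by $\{E_{i,j},E_{j,i},E_{i,i}-E_{j,j}\}$, a copy of $\fsl_2(\Z/q\Z)$. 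There are exactly $d(d-1)/2$ such pieces, and since
\[
3\cdot\tfrac{d(d-1)}{2}-(d^2-1)=\tfrac{(d-1)(d-2)}{2},
\]
the addition map $\bigoplus_{i<j}V_{ij}\to\fsl_d(\Z/q\Z)$ is surjective with kernel a free $\Z/q\Z$-module of rank $(d-1)(d-2)/2$, hence of size $q^{(d-1)(d-2)/2}$.

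The technical heart is to establish, for each pair $(i,j)$, the lower bound
\[
|[Y,Y]\cap V_{ij}|\gg q^3/K^6.
\]
I would proceed by Fourier analysis on the additive group $\fsl_d(\Z/q\Z)$: put $g(v)=\#\{(a,b)\in Y^2:[a,b]=v\}$ and use characters via the trace pairing, together with the cyclicity identity $\Tr(\xi[a,b])=\Tr([\xi,a]b)$. Fourier inversion against $1_{V_{ij}}$, whose transform is $q^3\cdot 1_{V_{ij}^\perp}$, gives
\[
\sum_{v\in V_{ij}}g(v)=\frac{1}{q^{d^2-4}}\sum_{\xi\in V_{ij}^\perp}\widehat g(\xi),
\]
whose main term $\widehat g(0)/q^{d^2-4}=|Y|^2/q^{d^2-4}\geq q^{d^2+2}/K^2$ must dominate the error. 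Controlling the error exploits that for $\xi$ regular semisimple, the linear map $a\mapsto[\xi,a]$ has rank $d(d-1)$ (its kernel being the $(d-1)$-dimensional centralizer of $\xi$), giving a non-trivial bound on $|\widehat g(\xi)|$. A Cauchy--Schwarz estimate on the support of $g|_{V_{ij}}$ then delivers the claim.

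Once this is in hand the conclusion is a rank-nullity calculation applied to the sum map:
\[
\biggl|\sum_{i<j}\bigl([Y,Y]\cap V_{ij}\bigr)\biggr|\geq\frac{\prod_{i<j}|[Y,Y]\cap V_{ij}|}{q^{(d-1)(d-2)/2}}\gg\frac{(q^3/K^6)^{d(d-1)/2}}{q^{(d-1)(d-2)/2}}=\frac{q^{d^2-1}}{K^{3d(d-1)}},
\]
using the identity $3d(d-1)/2-(d-1)(d-2)/2=d^2-1$. Since $\sum_{i<j}([Y,Y]\cap V_{ij})$ is contained in the $d(d-1)/2$-fold Minkowski sum $\sum_{d(d-1)/2}[Y,Y]$, this gives the lemma.

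The main obstacle lies in the Fourier error bound of the middle step. The naive estimate $|\widehat g(\xi)|\leq q^{d^2-1-r/2}|Y|$ with $r$ the rank of $a\mapsto[\xi,a]$ turns out to be insufficient for $d$ large, because the error can exceed the main term. Overcoming this probably requires a refined estimate obtained by averaging over the conjugation action of the diagonal torus (which preserves the root-space decomposition and hence the subspaces $V_{ij}$), or alternatively a direct Pl\"unnecke--Ruzsa-style argument that exploits the small doubling of $Y$ through iterated bilinear decompositions of the bracket.
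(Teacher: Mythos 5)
There is a genuine gap at what you yourself identify as the technical heart: the bound $|[Y,Y]\cap V_{ij}|\gg q^3/K^6$ is never established. Your Fourier computation only produces the expected count of pairs $(a,b)\in Y^2$ with $[a,b]\in V_{ij}$, up to an error term which, as you concede, the naive bound on $\widehat g(\xi)$ does not control (the frequencies $\xi\in V_{ij}^\perp$ with large centralizer, e.g.\ nilpotent ones, are exactly the problem); and even granting the main term $\approx q^{d^2+2}/K^2$, converting a count of pairs into a lower bound on the support of $g$ restricted to $V_{ij}$ by Cauchy--Schwarz requires a second-moment bound of the shape $\sum_{v\in V_{ij}}g(v)^2\lesssim q^{2d^2+1}K^{O(1)}$, i.e.\ control of collisions $[a,b]=[a',b']$ inside $V_{ij}$ for the unstructured set $Y\subset\fsl_d(\Z/q\Z)$, which is nowhere provided; small additive doubling of $Y$ does not by itself control the bilinear bracket map. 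So what your proposal actually proves is the final reduction (the kernel-counting step over the $d(d-1)/2$ subalgebras, which is correct and makes explicit the paper's last line), but not the core estimate.

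The missing idea is how the paper sidesteps precisely this difficulty. Instead of bracketing arbitrary elements of $Y=X-X$ and hoping the bracket lands in $V_{ij}$, pigeonhole $X$ over the $q^{d^2-4}$ cosets of $\fh_{i,j}$ (your $V_{ij}$): some coset $a+\fh_{i,j}$ contains more than $q^3/K$ points of $X$, hence $Y_{ij}:=(X-X)\cap\fh_{i,j}$ already satisfies $|Y_{ij}|>q^3/K$ --- this coset-difference trick is the reason the statement is about $[X-X,X-X]$ rather than $[X,X]$. Now all brackets are taken inside a copy of $\fsl_2(\Z/q\Z)$, where no Fourier analysis is needed: the paper computes exactly, prime power by prime power, the number of representations of a given $v\in\fsl_2(\Z/p^m\Z)$ as $[u_1,u_2]$ (about $p^{3m+k}$ when $p^k\|v$, via the linear relation $2x_1a_1+x_3a_2+x_2a_3=0$ and a direct count of its solutions). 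Feeding the $|Y_{ij}|^2>q^6/K^2$ pairs into this fiber count and discarding the bracket values divisible by a large $q_0$ yields $|[Y_{ij},Y_{ij}]|\gg q^3/(K^4\log^2K)$, after which your product-over-pairs divided by the kernel of size $q^{(d-1)(d-2)/2}$ finishes the proof with room to spare in the exponent $3d(d-1)$. To salvage your Fourier route you would have to prove both the refined exponential-sum estimate and the second-moment bound; the coset pigeonhole plus the explicit $\fsl_2$ fiber count makes both unnecessary.
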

\begin{proof}
Throughout the proof we assume that $2\nmid q$, the general case
requires trivial modifications only.
Let $p>2$ be a prime and for $v\in\fsl_2(\Z)$
write
\[
A_m(v)=\{(u_1,u_2)\in[\fsl_2(\Z/p^m\Z)]^2|\pi_{p^m}(v)=[u_1,u_2]\}.
\]
First we show that if $p^k\|v$, then
\bean
|A_m(v)|&=&p^{3m+k}+p^{3m+k-1}-p^{3m-1}-p^{3m-2}
\quad{\rm if}\;k<m,\;{\rm and}\\
|A_m(0)|&=&p^{4m}+p^{4m-1}+p^{4m-2}-p^{3m-1}-p^{3m-2}.
\eean
This immediately implies the lemma for $d=2$ even in a stronger form.
We will deduce the general case from this in the second half of the proof.

Assume that we have
\be
\label{eq_lie1}
\pi_{p^m}([u_1,u_2])=\pi_{p^m}(v)
\ee
for some
$u_1,u_2,v\in\fsl_2(\Z)$.
If $p^{l}\| u_1$, then we must have $p^{l}|v$ and
\be
\label{eq_lie2}
\pi_{p^{m-l}}([u_1/p^{l},u_2])=\pi_{p^{m-l}}(v/p^{l}).
\ee
Moreover (\ref{eq_lie2}) is in fact equivalent to (\ref{eq_lie1}),
hence we have
\bean
|A_m(v)\cap\{(u_1,u_2)\in[\fsl_2(\Z/p^{m}\Z)]^2:p^{l}\| u_1\}|\\
=p^{3l}|A_{m-l}(v/p^{l})
\cap\{(u_1,u_2)\in[\fsl_2(\Z/p^{m-l}\Z)]^2:p\nmid u_1\}|.
\eean
For this reason we first concentrate on
\[
|A_m(v)\cap\{(u_1,u_2)\in[\fsl_2(\Z/p^{m}\Z)]^2:p\nmid u_1\}|.
\]

With the notation in the proof of Lemma \ref{lm_Gap2}, we write
\bean
v&=&a_1E_{1,1}-a_1E_{2,2}+a_2E_{1,2}+a_3E_{2,1},\\
u_1&=&x_1E_{1,1}-x_1E_{2,2}+x_2E_{1,2}+x_3E_{2,1}\quad
{\rm and}\\
u_2&=&y_1E_{1,1}-y_1E_{2,2}+y_2E_{1,2}+y_3E_{2,1}.
\eean
Then we are looking for the number of solutions of the
system of equations
\bea
x_2y_3-x_3y_2&=&a_1 \label{eq_a1}\\
2x_1y_2-2x_2y_1&=&a_2 \label{eq_a2}\\
2x_3y_1-2x_1y_3&=&a_3 \label{eq_a3}
\eea
in $x_1,x_2,x_3,y_1,y_2,y_3\in\Z/p^m\Z$ with $p\nmid (x_1,x_2,x_3)$.
Adding $2x_1$ times (\ref{eq_a1}) and $x_3$ times (\ref{eq_a2})
to $x_2$ times (\ref{eq_a3}), we get
\be
\label{eq_dep}
2x_1a_1+x_3a_2+x_2a_3=0.
\ee
On the other hand if (\ref{eq_dep}) holds for some fixed
$p\nmid(x_1,x_2,x_3)$,
then we always have $p^m$ solutions in $(y_1,y_2,y_3)$.
Say if $p\nmid x_1$, the solutions are $y_1=t$, $y_2=(a_2+2x_2t)/2x_1$
and $y_3=(2x_3t-a_3)/2x_1$ for $t\in\Z/p^m\Z$.
A similar direct calculation shows that (\ref{eq_dep})
has $p^{2m+k}$ solutions in $x_1,x_2,x_3$, recall that
$p^k\|(a_1,a_2,a_3)$.
Out of these solutions, $(p^2-1)p^{2m-2+k}$ satisfies
$p\nmid(x_1,x_2,x_3)$ when $k<m$, and $(p^{3}-1)p^{3m-3}$
when $m=k$.
Therefore when $l\le k<m$, we have
\[
|A_m(v)\cap\{(u_1,u_2)\in[\fsl_2(\Z/p^{m}\Z)]^2:p^{l}\| u_1\}|
=p^{3m+k-l}-p^{3m-2+k-l},
\]
and the claim follows by summing over $0\le l\le k$.

Now we show how to deduce the Lemma from the claim.
For $1\le i<j\le d$, denote by $\fh_{i,j}\subset \fsl_d(\Z/q\Z)$
the Lie-subalgebra spanned by $E_{i,i}-E_{j,j},E_{i,j}$ and
$E_{j,i}$.
By the pigeon hole principle, for each $i,j$ there is a
coset $a+\fh_{i,j}$ such that $|X\cap(a+\fh_{i,j})|>q^3/K$,
hence $|(X-X)\cap(a+\fh_{i,j})|>q^3/K$.
Write $Y=(X-X)\cap \fh_{i,j}$,
and for $q_0|q$ let
\[
Y_{q_0}=\{v\in[Y,Y]:q_0|v\;{\rm but}\;q_0p\nmid v\;{\rm for}\;p|q
\;{\rm prime}\}.
\]
Since
\[
\prod_{p|q_0\: {\rm prime}}\frac{p+1}{p}\ll \log q_0,
\]
we have 
\[
q^6/K^2<|Y|^2<C\sum_{q_0} \log (q_0) q^3 q_0|Y_{q_0}|
\]
for some constant $C$
by the above claim and the Chinese Remainder Theorem.
Clearly $|Y_{q_0}|<(q/q_0)^3$, hence
\[
C\sum_{q_0>C' K^2\log K}\log (q_0) q^3 q_0|Y_{q_0}|<q^6/2K^2
\]
for some constant $C'$. This gives
\[
q^6/2K^2<C\sum_{q_0<C'K^2\log K} \log (q_0) q^3 q_0|Y_{q_0}|
\]
from where we get
\[
[Y,Y]\gg \frac{ q^3}{K^4\log^2K}.
\]
Since $\fsl_d(\Z/q\Z)=\fh_{1,2}+\ldots+\fh_{d-1,d}$, the lemma follows.
\end{proof}

\begin{proof}[Proof of Proposition \ref{pr_large}]
Recall the identities (\ref{eq_sum})--(\ref{eq_bracket}).
Apply Corollary \ref{cr_conj} for
$v_0:=\Psi_{q}^{q^2}(x_0)$, and write
\[
B_0:=\textstyle\prod_C\{gx_0g^{-1}:g\in A\},
\]
where $C$ is the constant from the corollary.
Then $B_0\subset(\prod_{3C}A)\cap\Ga_{qQ'}$ and
$|\pi_{q^2}(B_0)|>Q^{-\e}q^{d^2-1}$.
Define recursively
\[
B_i=\textstyle\prod_{d(d-1)/2}\{xyx^{-1}y^{-1}:x\in B_0.\widetilde B_0,
y\in B_{i-1}.\widetilde B_{i-1}\}.
\]
Then $B_i\subset\Ga_{q^{i+1}Q'}$ and we get
\[
|\pi_{q^{i+2}}(B_i)|>Q^{-(3d(d-1))^i\e}q^{d^2-1},
\]
if we use Lemma \ref{lm_sld} inductively.
Note that
\[
B_0.B_1\ldots B_{\lceil 1/c_1\rceil}\subset
(\textstyle\prod_{C(S,c_1)}A)\cap\Ga_{Q'}
\]
and
\[
|\pi_{q^{\lceil 1/c_1\rceil}}(B_0.B_1\ldots B_{\lceil 1/c_1\rceil})|
>(\prod_{i}Q^{-(2d(d-1))^i\e})|\Ga_q/\Ga_{q^{\lceil 1/c_1\rceil}}|.
\]
The assumptions on $q$ imply
\bean
|\pi_{Q_l}(B_0.B_1\ldots B_{\lceil 1/c_1\rceil})|
&>&(\prod_{i}Q^{-(2d(d-1))^i\e})
|(\Ga_{Q_l}\Ga_{q^{\lceil 1/c_1\rceil}})/\Ga_{Q_l}|^{-1}
|\Ga_q/\Ga_{Q_l}|\\
&>&(\prod_{i}Q^{-(2d(d-1))^i\e})Q^{-2c_2d^2}Q_l^{d^2-1}
\eean
and this proves the proposition if we made the choice
$\e=\e_2/\sum_{i\le\lceil1/c_1\rceil}(2d(d-1))^i$.
\end{proof}
\section{Proof of Proposition \ref{pr_product}}
\label{sc_proof}
Let $\e,Q,l,$ and $A$ be as in the proposition.
Similarly to the proof of Theorem \ref{th_genmod} (see (\ref{eq_escp}))
we can show that there is a positive constant $c_3>0$ depending
on $S$, such that $\chi_S^{(l)}(\Ga_q)<q^{-c_3}$ for every
$q|Q$.
In the course of the proof we will use several parameters
$\e_1,\e_2,c_1,c_2,L,$ and $L'$ that depend on each other and on $S,\e$
and $Q$
in a complicated way.
We emphasize the crucial property that although the parameters depend on $Q$,
they can be bounded by constants depending only on $S$ and $\e$.
We give now the definition of the parameters, but the reader may want
to skip to the next paragraph and refer to the definitions when the
parameters are used.
First we define a sequence for each parameter recursively.
Set $L^{(0)}=1$, and once $L^{(i)}$ is defined for some $i\ge0$,
we proceed as follows:  
Let $\e_1^{(i+1)}=\e/4L^{(i)}d^2$ and
pick $c_2^{(i+1)}<\e/8d^2$
in such a way that Proposition \ref{pr_small} holds with
$\d_1=2c_2^{(i+1)}d^2$ and $\e_1^{(i+1)}$.
Now set $c_1^{(i+1)}=(c_2^{(i+1)})^2c_3/d^2$,
$L^{(i+1)}=\lceil 1/c_1^{(i+1)}\rceil$.
Now if
\[
\prod_{p|Q:L^{(i+1)}>m_p>L^{(i)}}p^{m_p}<Q^{\e/4d^2}
\]
then we stop and set
$\e_1=\e_1^{(i+1)}$, $c_1=c_1^{(i+1)}$, $\e_2=\e/4$,
$c_2=c_2^{(i+1)}$, $L=L^{(i+1)}$, and $L'=L^{(i)}$.
Otherwise we continue with computing the next iterate of
all parameters.
Note that this process always ends in at most $4d^2/\e$ steps,
so the constants are bounded independently of $Q$.
We also assume that $\d$ is sufficiently small and $Q$ is sufficiently
large depending on all these parameters.
Recall that
\[
Q_s=\prod_{p|Q:m_p\le L}p\quad{\rm and}\quad
Q_l=\prod_{p|Q:m_p> L}p^{m_p},
\]

First choose an integer $q_1|Q_l$ such that $c_1<E_p(q_1)<2c_1$,
which is possible, since $L>c_1^{-1}$, and set $A_1=(A.A)\cap\Ga_{q_1}$.
If $\d$ is sufficiently small, we have
\[
\chi_{S}^{(l)}(A_1)>q_1^{-d^2}>Q^{-2c_1d^2}.
\]
Next, we take an element $g_1\in A_1$ such that
$g_1\notin\Ga_q$ for any $q>Q^{c_2^2}$.
This is possible, since $\chi_S^{(l)}(\Ga_q)<q^{-c_3}$
and the number of $q$'s we need to consider is at most $Q^{c_2^2c_3}$
(if $Q$ is large enough)
and we chose our parameters in such a way that $2c_1d^2=2c_2^2c_3$.
This shows that there is an integer $q|Q_l$ with
$w(q)>w(Q_l)-c_2\log Q$ and $c_1<E_p(q)<c_2$ for $p|q$, and
$q\|(g_1-1)$.
This element $g_1$ will be used in the construction of $x_0$
to be used when we apply Proposition \ref{pr_large} below.

Before that, similarly as above, we choose an integer
$q_2|Q_l$ such that $c_2<E_p(q_2)<2c_2$ and set $A_2=(A.A)\cap\Ga_{q_2}$.
Note that $\chi_S^{(l)}(A_2)>Q^{-2c_2d^2}$.
We apply Proposition \ref{pr_small} for the set $A_2$, and
we get that
\[
(\textstyle\prod_{C(d,L)}A_2).\Ga_{Q_s^L}\supset\Ga_{q_0^L}
\]
for some $q_0<Q^{\e_1}$.
This implies
\bea
|\pi_{Q/Q_l}(\textstyle\prod_{C(d,L)}A_2)|
&>&(\prod_{p|q_0}p^{m_p})^{-d^2}|\Ga/\Ga_{Q/Q_l}|\nonumber\\
&>&Q^{-\e/4}Q^{-L'\e_1d^2}|\Ga/\Ga_{Q/Q_l}|\nonumber\\
&=&Q^{-\e/2}|\Ga/\Ga_{Q/Q_l}|\label{eq_A2}
\eea
since our choice $\e_1=\e/4L'd^2$ and
\[
\prod_{p|Q:L>m_p>L'}p^{m_p}<Q^{\e/4d^2}.
\]
Choose
\[
g_2\in(\textstyle\prod_{2C(d,L)}A_2)\cap\Ga_{q_2}
\]
with $\pi_{Q_s^L/q_0^L}(g_2)=\pi_{Q_s^L/q_0^L}(g_1)$,
and take $x_0=g_1g_2^{-1}$.
Recall that $q\|(g_1-1)$ with $c_1<E_p(q)<c_2$ for $p|q$
and $g_2\in\Ga_{q_2}$ with $E_p(q_2)>c_2$ for $p|Q_l$,
hence $q\|(x_0-1)$.
We also have $x_0\in\Ga_{Q'}$ with $Q'=Q_s^L/q_0^L$.
Now we can apply Proposition \ref{pr_large} for $A\cup\{x_0,x_0^{-1}\}$.
Set
\[
B=(\textstyle\prod_{C(S,c_1,L)}A)\cap\Ga_{Q_s^L/q_0^L}.
\]
Proposition \ref{pr_large}
implies that $|\pi_{Q_l}(B)|>Q^{-\e_2-2c_2d^2}Q_l^{d^2-1}$,
if $\d$ is small enough.
Combine this with (\ref{eq_A2}) and we finally get
\[
|\pi_Q[\textstyle\prod_{C(S,\e)} A]|
>|\pi_{Q_l}(B)|\cdot|\pi_{Q/Q_l}[\textstyle\prod_{C(d,L)} A]|
> Q^{-\e/2-\e_2-2c_2d^2}|\Ga/\Ga_Q|.
\]
By (\ref{eq_Helf2}) this implies the proposition, since $\e_2<\e/4$
and $c_2<\e/8d^2$ and $|\pi_Q(A)|<|\Ga/\Ga_Q|^{1-\e}$.


\bigskip
J. Bourgain

{\sc School of Mathematics, Institute for Advanced Study, Princeton,
NJ 08540, USA}

{\em e-mail address:} bourgain@math.ias.edu

\bigskip
P. P. Varj\'u

{\sc Department of Mathematics, Princeton University, Princeton,
NJ 08544, USA and

Analysis and Stochastics Research Group of the
Hungarian Academy of Sciences, University of Szeged, Szeged, Hungary}

{\em e-mail address:} pvarju@princeton.edu

\end{document}